\documentclass[11pt]{article}
\usepackage{amsfonts}
\usepackage{amssymb}
\usepackage{amscd}
\usepackage{amsmath}
\usepackage{epsfig}
\usepackage{graphicx, subfigure}
\usepackage{latexsym}
\usepackage{pst-3dplot}
\usepackage[symbol*]{footmisc}
\usepackage{pdfsync}
\usepackage{hyperref}
\usepackage{multirow}
\usepackage[utf8]{inputenc}
\usepackage{pdfsync}
\usepackage{float}

\usepackage{color}
\usepackage{xcolor}
\usepackage{epsfig}
\usepackage{graphicx, subfigure}
\usepackage[makeroom]{cancel}
\usepackage{ulem}

\newlength{\dinwidth}
\newlength{\dinmargin}
\usepackage[all,cmtip]{xy}

\newtheorem{definition}{Definition}
\newtheorem{theorem}{Theorem}

\newtheorem{proposition}{Proposition}
\newtheorem{corollary}{Corollary}
\newtheorem{remark}{Remark}
\newtheorem{lemma}{Lemma}
\newtheorem{example}{Example}

\def\Z{\mathcal Z}
\def\Q{\mathcal Q}
\def\F{\mathbb F_2}

\def \P{\mathcal P}

\def \a{\alpha}
\def \b{\beta}

\def\D{\mathcal D}
\def\d{\mathcal D}
\def\B{\mathcal B}

\begin{document}

\renewcommand*{\thefootnote}{\fnsymbol{footnote}}

\title{Block designs and  systems of pairs}
\author{Alexander Shramchenko and Vasilisa Shramchenko$^*$}

\date{}

\maketitle

\footnotetext[1]{Department of mathematics, University of
Sherbrooke, 2500, boul. de l'Universit\'e,  J1K 2R1 Sherbrooke, Quebec, Canada. E-mail: {\tt Vasilisa.Shramchenko@Usherbrooke.ca}}

 \begin{abstract}
We propose a new way to construct one block design from another, thus linking some block designs in a new way. At the same time, we find relationships between block designs and systems of pairs.  In particular, these relationships  allow us to generate some systems of pairs. 
 
  \end{abstract}
%
  
	Keywords: Block designs, finite projective planes, incidence matrix, symmetric and quasi-symmetric designs, 1-factorizations of complete graphs, matchings.

\section{Introduction}

A combinatorial object known as {\it a system of pairs} has been largely forgotten. However, it has connections to various topics in mathematics. It appeared as early as 1859 in \cite{Reiss}.   In \cite{Anderson_topology} systems of pairs were used in the context of enumeration of mutually complementary topologies on a given finite set. System of pairs of a certain type are studied in \cite{Korovina}. In \cite{Rokowska}, they appear in a context of resolvable block designs, and in \cite{Hanani}, systems of pairs are used in a study of quadruple systems. 

 A prototype for systems of pairs is the special case where each {\it subsystem} contains $t=v/2$ pairs, where $v$ is the total (even) number of elements. In this special case, each subsystem is a perfect matching of a complete graph with $v$ vertices. Thus the systems of pairs take origin in the theory of 1-factorizations of complete graphs (partitions of the set of edges of the graph into perfect matchings) dating back to \cite{Konig} first published in 1936, see also \cite{Anderson_Room}. General systems of pairs correspond to decompositions of the edge set of a complete graph into matchings of prescribed size.
Graph decpositions is a currently active area of research focused on understanding their structure, existence, algorithms, and applications, see \cite{Csaba, Fomin,  Keevash, Ma} and references therein.

Even though edge set decompositions fit naturally into design theory, see for example \cite{encyclopedia} or VI.24 of \cite{handbook}, 
the connection between system of pairs and block designs has not yet been sufficiently explored. The purpose of this paper is to bring back this question by showing that some systems of pairs can be constructed using block designs and vice versa. 

More precisely, we show (Theorem \ref{thm_main}) that starting with   any symmetric design or a quasi-symmetric design satisfying a particular property built on a set $V$, one may construct another block design. This new block design  may contain identical blocks even if the original one does not. The blocks of the new block design are indexed by unordered pairs of elements of $V$. The pairs corresponding to identical blocks are then grouped together to form {\it parallel classes}.  The collection of these parallel classes satisfies a symmetry property (Theorem \ref{thm_symmetric}) and may form a system of pairs (Theorem \ref{thm_iff} and Proposition \ref{prop_auto}). This process may sometimes be iterated (Example \ref{example_designpairs}).  On the other hand, we show that starting with a system of pairs of certain type, one may construct block designs (Theorem \ref{thm_pairstoblocks}).

\section{From one block design to another}
\label{sect_new}

A balanced incomplete block design is defined as follows, see \cite{encyclopedia, handbook, Rib}.

\begin{definition}
\label{def}
Let $V$ be a finite set of cardinality $v=|V|$ and let $b,k,r,\lambda$ be four positive integers with $k<v$. A balanced incomplete block design (BIBD) with parameters $(v,b,r,k,\lambda)$ built on the set $V$ is a collection of $b$ blocks, each of which is a $k$-element subset of $V$, such that every element of $V$ is contained in exactly $r$ blocks and every pair of elements of $V$ is contained in exactly $\lambda$ blocks.
\end{definition}

We will say simply {\it block design} or {\it design} to mean BIBD. The elements of the set $V$ are also referred to as {\it treatments} or {\it varieties}. 
There may be several block designs or none for given values  of $(v,b,r,k,\lambda)$.  Definition \ref{def} implies the following necessary conditions for the existence of a BIBD: $bk=vr$ and $r(k-1)=\lambda(v-1)$. Parameters satisfying these conditions are called {\it admissible}. Another necessary condition for a design with $ 2\leqslant k<v$ to exist is Fisher's inequality, $b\geqslant v.$

In general, some blocks in a design may coincide. Designs without repeated blocks are called {\it simple}. A block design is called {\it symmetric} if $v=b$ (or if $r=k$).   A symmetric design with $\lambda=1$ is called a {\it projective plane}.

We denote by $\D$ a block design from Definition \ref{def} built on the set $V=\{1, 2, \dots, v\}$ with parameters  $(v,b,r, k,\lambda)$. The blocks of $\D$ are assumed ordered in some arbitrary way from $1$ to $b$ and denoted by $B_n$ with  $n=1, \dots,  b.$

 The {\it incidence matrix} $M=M(\D)$ of $\D$ is a matrix of the size $v\times b$ 
such that the entry $M_{ij}$ is 1 if $i\in B_{j}$ and 0 otherwise: $M_{ij}= |\{i\}\cap B_n| = \chi_{B_j}(i),$ where $\chi_{B_j}(t)$ is a characteristic function of the set ${B_j}$. This matrix satisfies
\begin{equation}
\label{MMt}
MM^T=(r-\lambda)I+\lambda J,
\end{equation}
where $I=I_v$ is the identity matrix of size $v$ and $J$ is a $v\times v$ matrix whose each entry is 1. 

For a design $\D$ on a set $V$, for any pair $\{x,y\}\subset V$, define $\Z(x,y)$ as a $b\times1$ matrix such that its $j$th entry is
\begin{equation}
\label{Z}
\Z_j(x,y)= \chi_{B_j}(x)-\chi_{B_j}(y). 
\end{equation}
In other words, if $(e_1, \dots, e_v)$ is the canonical basis of $\mathbb R^v$ where $e_i$ is a column vector such that $(e_i)_j=\delta_{ij}$, we have
\begin{equation}
\label{ZM}
\Z(x,y)= (e_x-e_y)^TM.
\end{equation}
It is shown in \cite{S2}, that vectors \eqref{ZM} span the eigenspace of dimension $v-1$ of the matrix $M^TM$ corresponding to the eigenvalue $r-\lambda$. The remaining eigenspace of $MM^T$ is spanned by the vector $(1,\dots, 1)$ corresponding to the eigenvalue $rk.$

Let $\Z^+(x,y)$ stand for the column vector  whose $j$th component is 
\begin{equation}
\label{zplus}
\Z_j^+(x,y):=|\Z_j(x,y)|. 
\end{equation}
Note that $\Z^+(x,y)=\Z^+(y,x)$ and thus this vector corresponds to an unordered pair $\{x,y\}$. 

Denote by $\Z^+$ the matrix of size $b\times \frac{v(v-1)}{2}$ composed of the column vectors $\Z^+(x,y)$ built from $\D$ by \eqref{ZM}, \eqref{zplus} where $(x,y)$ runs through the set of unordered pairs of elements from $V$. An entry of $\Z^+$ is either 0 or 1. 
Let us write $\Z^+_j$ for the $j$th row of $\Z^+$, the row corresponding to the block $B_j.$  We will also write $\Z^+(\D)$ for the matrix $\Z^+$ built from a design $\D$ when several designs are involved.
\\

\begin{remark}
\label{rmk_complement}
Consider the block design $\bar \D$ whose blocks are $\bar B_i=V\setminus B_i$, the complements of the blocks of $\D$. 
Then $\Z^+(\D)=\Z^+(\bar \D).$ This follows readily from definition \eqref{zplus} of the matrix $\Z^+.$
\end{remark}

Let us also note that in the case $k\neq \frac{v}{2}$, the design $\D$ and its complement $\bar \D$ can be reconstructed from the matrix $\Z^+=\Z^+(\D). $ Indeed, let $V=\{1, \dots, v\}$ and assume the matrix $\Z^+$ to be given. Let us fix $1\leq j\leq b$ and suppose  that $1\in B_j.$ This assumption and the $j$th components of the column vectors $\Z^+(1,2), \dots, \Z^+(1,v)$ allow us to reconstruct $B_j$ completely: for every $n\in V$, we have $n\in B_j$ if and only if $\Z^+(1,n)=0$. Similarly, the assumption $1\notin B_j$ allows us to reconstruct $B_j$ from $\Z^+$: for every $n\in V$, we have $n\in B_j$ if and only if $\Z^+(1,n)=1$. If the block sizes of $\D$ and $\bar \D$ are different, considering both assumptions, it is possible to place the reconstructed blocks in $\D$ or $\bar \D$ in an unambiguous way.

\begin{lemma}
\label{lemma-main}
Let $\D$ be a design with parameters $(v,b,r,k,\lambda)$. For $i\neq j$, denote $\mu_{ij}=|B_i\cap B_j|$ the number of common elements in the blocks $B_i$ and $B_j$. Then for the rows $\Z^+_i $ and $\Z^+_j$ of $\Z^+=\Z^+(\D)$ corresponding to the blocks $B_i$ and $B_j$, we have 
\begin{equation}
\label{newlambda}
\Z^+_i (\Z^+_j)^T =\mu_{ij}(v-2k+\mu_{ij})+(k-\mu_{ij})^2.
\end{equation}
\end{lemma}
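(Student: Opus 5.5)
The plan is to compute the inner product directly from the definitions. The row $\Z^+_i$ is indexed by unordered pairs $\{x,y\}\subset V$, and its entry is $|\chi_{B_i}(x)-\chi_{B_i}(y)|$. This entry equals $1$ exactly when the pair $\{x,y\}$ meets $B_i$ in precisely one element, that is, when the pair is ``split'' by $B_i$. Hence $\Z^+_i(\Z^+_j)^T$ is the number of unordered pairs $\{x,y\}$ that are split simultaneously by $B_i$ and by $B_j$. No design properties beyond $|B_i|=|B_j|=k$ and $|B_i\cap B_j|=\mu_{ij}$ are needed, so the argument is purely a counting one.

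To carry out the count, partition $V$ into four cells according to membership in $B_i$ and $B_j$:
\begin{itemize}
\item $P=B_i\cap B_j$, of size $\mu_{ij}$;
\item $Q=B_i\setminus B_j$ and $R=B_j\setminus B_i$, each of size $k-\mu_{ij}$;
\item $S=V\setminus(B_i\cup B_j)$, of size $v-2k+\mu_{ij}$.
\end{itemize}
A pair is split by $B_i$ iff one of its elements lies in $P\cup Q$ and the other lies in $R\cup S$. It is split by $B_j$ iff one element lies in $P\cup R$ and the other in $Q\cup S$. Intersecting these conditions, a pair is split by both blocks exactly when it is of type $\{P,S\}$ or of type $\{Q,R\}$.

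The pairs of type $\{Q,R\}$ qualify: for such a pair, $B_i$ contains the $Q$-element only and $B_j$ contains the $R$-element only. The pairs of type $\{P,S\}$ qualify as well, while every other type fails one of the two conditions; I will check each type briefly. The count is then $|P||S|+|Q||R|=\mu_{ij}(v-2k+\mu_{ij})+(k-\mu_{ij})^2$, which is \eqref{newlambda}.

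The only step needing care is the case check that exactly these two types of pairs are split by both blocks, in particular that pairs lying within a single cell or of types such as $\{P,Q\}$ are excluded. A small table over the ten possible cell-type pairs settles this.
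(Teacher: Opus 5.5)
Your proposal is correct and matches the paper's proof. Both arguments count the pairs split by both $B_i$ and $B_j$, and both show these are exactly the pairs with one element in $B_i\cap B_j$ and the other in $V\setminus(B_i\cup B_j)$, or one element in $B_i\setminus B_j$ and the other in $B_j\setminus B_i$, which gives $\mu_{ij}(v-2k+\mu_{ij})+(k-\mu_{ij})^2$.
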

The lemma thus states that, for given $i\neq j$ there are exactly $\mu_{ij}(v-2k+\mu_{ij})+(k-\mu_{ij})^2$ unordered pairs $\{x,y\}$ of elements of $V$ for which $\Z^+_i(x,y)=\Z^+_j(x,y)=1$.
\\

{\it Proof.} Consider two sets of $k$ elements, $B_i$ and $B_j$ with $i\neq j$, having $\mu_{ij}$ elements in common. We want to count the number of pairs of elements $\{x,y\}$ for which exactly one element belongs to $B_i$ and exactly one element belongs to $B_j$. There are two ways to form such a pair. One way is to take one element from $B_i\cap B_j$ and another one from $V\setminus(B_i\cup B_j)$. This gives $\mu_{ij}(v-2k+\mu_{ij})$ distinct pairs. Another way is to take one element from $B_i\setminus(B_i\cap B_j)$ and another one from $B_j\setminus(B_i\cap B_j)$. This gives $(k-\mu_{ij})^2$ distinct pairs. 
\hfill $\Box$

  A block design is symmetric if and only if every two distinct blocks have $\lambda$ elements in common, see II.6.1 of \cite{handbook}, that is the {\it intersection numbers} $\mu_{ij}=\lambda$ for any $i,j.$ As a natural generalization of symmetric designs, the quasi-symmetric designs are designs with exactly
two intersection numbers, $\mu_{ij}\in\{\mu_1, \mu_2\}$ for all $1\leqslant i,j \leqslant b$ with $\mu_1\neq \mu_2$, see \cite{quasi}. Quasi-symmetric designs are known to have deep connections to strongly regular graphs via their block graphs.   The next theorem gives a way to construct a new block design starting with a symmetric design or a subclass of quasi-symmetric ones. 
\begin{theorem}
\label{thm_main}
For a block design $\D$ with parameters $(v,b,r,k,\lambda)$, the matrix $\Z^+=\Z^+(\D)$ is an incidence matrix of another block design $\D'$ if and only if $\mu_{ij}=|B_i\cap B_j|$ as a function of $i,j\in\{1, \dots, b\}$ for $i\neq j$ takes at most two values $\mu_1$ and $\mu_2$ for which 
\begin{equation}
\mu_1=\mu_2\qquad\text{ or } \qquad \mu_1+\mu_2=2k-\frac{v}{2}.
\label{quadratic}
\end{equation}
In this case, the block design $\D'$ has parameters $(v',b',r',k',\lambda')$ where 
\begin{equation}
\label{parameters}
v'=b, \qquad b'=\frac{v(v-1)}{2}, \qquad r'=k(v-k), \qquad k'=2(r-\lambda), \qquad \lambda' = 2\mu_1^2+\mu_1(v-4k)+k^2. 
\end{equation}
Alternatively, $\lambda'$ can be obtained by 
\begin{equation}
\label{lambda-prime}
 \lambda' = \frac{k(v-k)(2r-2\lambda-1)}{b-1}. 
\end{equation}

\end{theorem}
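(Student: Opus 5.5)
We view $\Z^+$ as the incidence matrix of a putative design $\D'$. Its point set is the set of $b$ blocks of $\D$, so $v'=b$. Its blocks are indexed by the $\frac{v(v-1)}{2}$ unordered pairs $\{x,y\}$; the block of $\{x,y\}$ consists of those $B_j$ containing exactly one of $x,y$. We then check the three defining requirements of Definition \ref{def} one at a time.

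\emph{Constant block size.} The column $\Z^+(x,y)$ counts the blocks containing exactly one of $x,y$. That number is $2(r-\lambda)$: there are $r-\lambda$ blocks containing $x$ but not $y$, and $r-\lambda$ containing $y$ but not $x$. Hence $k'=2(r-\lambda)$ independently of the pair. We also need $k'<v'=b$, which should follow from Fisher's inequality and $k<v$, and $k'>0$, which needs $r>\lambda$ (true for $k<v$).

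\emph{Constant replication.} The row $\Z^+_j$ counts the pairs with one element in $B_j$ and one outside, which is $k(v-k)$. So $r'=k(v-k)$ for every point of $\D'$.

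\emph{Balance.} This is the only genuine condition. By Lemma \ref{lemma-main}, points $i\neq j$ of $\D'$ lie together in
$$f(\mu_{ij})=\mu_{ij}(v-2k+\mu_{ij})+(k-\mu_{ij})^2=2\mu_{ij}^2+\mu_{ij}(v-4k)+k^2$$
blocks. Thus $\D'$ is a BIBD if and only if $f(\mu_{ij})$ is the same for all $i\neq j$. Since $f$ is a quadratic with leading coefficient $2$, we have $f(\mu_1)=f(\mu_2)$ if and only if $\mu_1=\mu_2$ or $\mu_1+\mu_2=-\frac{v-4k}{2}=2k-\frac v2$.

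For sufficiency: if the $\mu_{ij}$ take at most two values satisfying \eqref{quadratic}, then $f$ is constant on them. For necessity: if $f(\mu_{ij})$ is constant, then every value $\mu_{ij}$ is a root of $f(\mu)=\lambda'$, so there are at most two distinct values, and any two distinct ones sum to $2k-\frac v2$. This gives the equivalence together with $\lambda'=f(\mu_1)$.

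\emph{Alternative formula for $\lambda'$.} Once $\D'$ is known to be a BIBD, the admissibility relation $r'(k'-1)=\lambda'(v'-1)$ gives $\lambda'=\frac{k(v-k)(2r-2\lambda-1)}{b-1}$. We need $b>1$, which holds.

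The only delicate step is the necessity direction: we must note that constancy of a quadratic on the set of values forces at most two values, and that the pairing condition follows from Vieta's formula. The other steps are direct counts. One should also check the degenerate nonemptiness condition $k'\ge 1$, which holds since $r>\lambda$.
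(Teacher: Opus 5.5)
Your proposal is correct and follows the paper's proof essentially step for step: the same counts for $v',b',r',k'$, balance reduced via Lemma~\ref{lemma-main} to constancy of the quadratic $2\mu^2+\mu(v-4k)+k^2$ on the intersection numbers, the factorization $(\mu_1-\mu_2)(2\mu_1+2\mu_2+v-4k)=0$, and $\lambda'$ from $r'(k'-1)=\lambda'(v'-1)$. The nondegeneracy checks you add are not in the paper; note that $k'<b$ follows directly from $\lambda\geq 1$, since the $\lambda$ blocks containing both $x$ and $y$ are excluded from the block indexed by $\{x,y\}$, so you do not need Fisher's inequality there.
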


{\it Proof.}  Suppose, there exists a design $\D'$ for which $\Z^+$ is an incidence matrix. The matrix $\Z^+$ has dimensions $b\times \frac{v(v-1)}{2}$. Therefore, $\D'$ is built on a set $V'$ of $b$ elements and has ${v(v-1)}/{2}$ blocks. To find $r'$, we need to find the number  of nonzero elements in each row of $\Z^+.$ This is the number of pairs of elements $\{x,y\}\subset V$ for which only one element belongs to a given block of $\D$, that is $k(v-k)$. To find $k'$, we need to find the number of nonzero elements in each column of $\Z^+$. This is the number of blocks of $\D$ containing exactly one element of a given pair $\{x,y\}$. Since each of $x$ and $y$ appears in $r$ blocks of $\D$ and there are $\lambda$ blocks in which they both appear, we obtain that $k'=2(r-\lambda)$.

Given that we obtain constant values for $v', b', r'$ and $k'$, starting with any design $\D$, the only condition for the existence of a design $\D'$ is the existence of a value $\lambda'$ independent of the choice of a pair of elements of the set $V'=\{1, \dots, b\}$. In other words, we need a design $\D$ such that for any of its distinct blocks $B_i$ and $B_j$, the number of  columns of $\Z^+$ containing 1 in both lines $i$ and $j$ is the same for all $i$ and $j$ with $i\neq j$. This number is given by \eqref{newlambda} and does depend on $i$ and $j$ in general. 

Thus we can say that if and only if a design $\D$ is such that the value \eqref{newlambda} is the same for all $i\neq j$, then $\Z^+$ is an incidence matrix of some block design $\D'$ with $\lambda'$ given by \eqref{newlambda}. Since \eqref{newlambda} is quadratic in $\mu_{ij}$, we can allow for a maximum of two values $\mu_1$ and $\mu_2$ of $\mu_{ij}$. 
 Equating expression \eqref{newlambda}  for both values of  $\mu_{ij}$, we obtain the equation $2\mu_1^2+\mu_1(v-4k)=2\mu_2^2+\mu_2(v-4k)$. This equation factorizes to give the two possibilities in \eqref{quadratic}. 
The quadratic expression for $\lambda'$ in \eqref{parameters} coincides with that in \eqref{newlambda} for $\mu_{ij}=\mu_1.$ 

Having proved that $\D'$ is a block design, we can obtain $\lambda'$ from the relation $r'(k'-1) = \lambda'(v'-1)$ valid for parameters of a block design. This gives formula \eqref{lambda-prime}. 
\hfill $\Box$

\begin{remark} Denote by $\B$ the set of blocks of a design $\D$ built on a set $V$. Then it is customary to write $\D=(V, \B)$. If a design $\D'$ from Theorem \ref{thm_main} exists, then $\D'=(\B, V^{(2)})$, where $V^{(2)}$ stands for the set of unordered pairs of elements of $V$. 
\end{remark}

\begin{corollary}
\label{cor_symmetric}
If $\D$ is a symmetric design with parameters $(v=b,r=k,\lambda)$, then there exists a design $\D'$ from Theorem \ref{thm_main}. 
\end{corollary}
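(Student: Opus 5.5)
The plan is to verify the hypothesis of Theorem \ref{thm_main} directly and then read off the parameters. As recalled just before Theorem \ref{thm_main}, a block design is symmetric if and only if every two distinct blocks meet in exactly $\lambda$ points (II.6.1 of \cite{handbook}). Hence for a symmetric design $\D$ the function $\mu_{ij}=|B_i\cap B_j|$, $i\neq j$, is constant and equal to $\lambda$. It therefore takes at most two values, and we may take $\mu_1=\mu_2=\lambda$. The first alternative in \eqref{quadratic} holds, and Theorem \ref{thm_main} gives that $\Z^+(\D)$ is the incidence matrix of a block design $\D'$.

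For completeness I would also make the conclusion concrete. By Lemma \ref{lemma-main}, any two distinct rows of $\Z^+$ have inner product $\lambda(v-2k+\lambda)+(k-\lambda)^2$, independent of $i,j$. This is exactly the constant $\lambda'$ needed. The remaining parameters come from \eqref{parameters} with $b=v$, $r=k$:
\[
v'=v,\qquad b'=\tfrac{v(v-1)}{2},\qquad r'=k(v-k),\qquad k'=2(k-\lambda),
\]
and $\lambda'=2\lambda^2+\lambda(v-4k)+k^2$.

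One also has to check that $\D'$ is a genuine BIBD in the sense of Definition \ref{def}: all parameters must be positive and $k'<v'$. Positivity of $r'$ and $k'$ follows from $0<k<v$ and $\lambda<k$; the latter holds since $r(k-1)=\lambda(v-1)$ and $k<v$ give $\lambda<r=k$. Then $\lambda'=\lambda(v-2k+\lambda)+(k-\lambda)^2\geq (k-\lambda)^2>0$, because $v-2k+\lambda=|V\setminus(B_i\cup B_j)|\geq 0$.

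The main obstacle is the inequality $k'<v'$, that is $2(k-\lambda)<v$. I would derive it from the symmetric relation $k(k-1)=\lambda(v-1)$. Writing $n=k-\lambda$, we get $v-1=k(k-1)/\lambda$, and the claim reduces to $2n\le v-1$, which is a standard inequality for symmetric designs. An alternative route: $k'=v'$ would force every column of $\Z^+$ to be all ones. That would mean every block of $\D$ separates every pair, which is impossible for $v\ge 3$. This handles any degenerate case not already excluded by the standing assumption $k<v$.
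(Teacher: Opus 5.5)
Your proof is correct and follows the paper's argument: a symmetric design has constant intersection number $\lambda$, so the first alternative of \eqref{quadratic} holds with $\mu_1=\mu_2=\lambda$, Theorem \ref{thm_main} applies, and $\lambda'=2\lambda^2+\lambda(v-4k)+k^2$. Your nondegeneracy checks go beyond what the paper verifies, and the inequality $2(k-\lambda)<v$ that you call the main obstacle follows at once from your own observation $v-2k+\lambda\ge 0$ together with $\lambda\ge 1$, so no external inequality is needed.
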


{\it Proof.}  
In a symmetric design every two distinct blocks have $\lambda$ elements in common, II.6.1 of \cite{handbook}. 
Thus the condition of Theorem \ref{thm_main} for the intersection numbers is satisfied: $\mu_{ij}=\lambda=\mu_1=\mu_2$. From \eqref{parameters}, we obtain $\lambda' = 2\lambda^2+\lambda(v-4k)+k^2.$
\hfill $\Box$

\begin{corollary}
A design $\D$ gives rise to a design $\D'$ from Theorem \ref{thm_main} if and only if its complement $\bar \D$ gives rise to a design $\bar\D'$ as described in Theorem \ref{thm_main}. In this case  the two designs $\D'$ and $\bar\D'$ coincide. 
\end{corollary}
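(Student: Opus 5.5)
The plan is to use Remark~\ref{rmk_complement}, which says $\Z^+(\D)=\Z^+(\bar\D)$, together with the observation that the criterion of Theorem~\ref{thm_main} depends only on the matrix $\Z^+$. In fact the phrase ``$\Z^+$ is an incidence matrix of a block design'' concerns $\Z^+$ alone. So once we know that $\bar\D$ is itself a block design, to which Theorem~\ref{thm_main} applies, both parts of the corollary follow almost immediately.

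First, I would verify that $\bar\D$ is a BIBD. Its blocks $\bar B_i=V\setminus B_i$ have size $\bar k=v-k$. Each element lies in $\bar r=b-r$ of them. A pair $\{x,y\}$ lies in $\bar\lambda=b-2r+\lambda$ of them, by inclusion--exclusion. These values are constant, so $\bar\D$ is a design, assuming $\bar k<v$ and $\bar\lambda>0$ as Definition~\ref{def} requires. This is the only delicate point, namely the degenerate case $b-2r+\lambda=0$. I would note it as the standing assumption that $\bar\D$ is a genuine design. It is implicit in the phrase ``its complement $\bar\D$ gives rise to''.

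Next comes the equivalence. By Theorem~\ref{thm_main}, $\D$ gives rise to $\D'$ exactly when $\Z^+(\D)$ is the incidence matrix of a design. Likewise $\bar\D$ gives rise to $\bar\D'$ exactly when $\Z^+(\bar\D)$ is. Since these two matrices are equal, the two conditions are the same. As a consistency check, I would also verify the equivalence directly from condition~\eqref{quadratic}. The complementary intersection numbers are $\bar\mu_{ij}=v-2k+\mu_{ij}$. Under this change we get $\bar\mu_1+\bar\mu_2=2(v-2k)+\mu_1+\mu_2$, and $2\bar k-\frac v2=2(v-k)-\frac v2$. Hence $\mu_1+\mu_2=2k-\frac v2$ holds if and only if $\bar\mu_1+\bar\mu_2=2\bar k-\frac v2$. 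The alternative $\mu_1=\mu_2$ is clearly preserved as well.

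Finally, the designs $\D'$ and $\bar\D'$ are defined by the same incidence matrix $\Z^+(\D)=\Z^+(\bar\D)$. The points are indexed by the blocks, with $B_i$ matched to $\bar B_i$, and the blocks are indexed by the unordered pairs of $V$. So the two designs coincide. As a check, the parameters agree: $\bar k(v-\bar k)=k(v-k)$ and $2(\bar r-\bar\lambda)=2(r-\lambda)$.
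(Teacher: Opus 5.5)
Your proposal is correct and is the paper's own argument. The paper's proof is just the observation (Remark~\ref{rmk_complement}) that $\Z^+(\D)=\Z^+(\bar\D)$, so both the criterion of Theorem~\ref{thm_main} and the resulting design $\D'=\bar\D'$ depend only on this common matrix. Your additional checks are also correct and make explicit what the paper leaves implicit: that $\bar\D$ is itself a BIBD except in the degenerate case $b-2r+\lambda=0$ (which occurs exactly when $k=v-1$), and that condition~\eqref{quadratic} is invariant under $\mu_{ij}\mapsto v-2k+\mu_{ij}$, $k\mapsto v-k$.
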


{\it Proof. This follows from Remark \ref{rmk_complement} as both designs $\D$ and $\bar\D$ correspond to the same matrix $\Z^+$.}

\section{Systems of pairs}
\label{sect_systems}
In this section, following \cite{Korovina, Rokowska} we define systems of pairs, their particular types and discuss some relationships with block designs. 
\begin{definition}
\label{def_pairs}
Given a set $V=\{1, 2, \dots, v\}$, a system of pairs $\P$ with parameters $(v,  n, t)$ on $V$ is an arrangement of all $\frac{v(v-1)}{2}$ unordered pairs into $n$ families $\P_1, \dots, \P_n$ (called subsystems or parallel classes) of $t$ pairs each for some integer $n,t\geqslant 1$ satisfying $nt=\frac{v(v-1)}{2}$ in such a way that no subsystem contains any element of $V$ more than once.  A system of pairs with $t=1$  is called trivial. 
\end{definition}
In the references, this definition is often used for even $v$ with $n=v-1$ and $t=v/2$.  In this case, the system of pairs gives a 1-factorization of  a complete graph with $v$ vertices. The next two definitions introduce two notable classes of systems of pairs. 
\begin{definition}
\label{def_symmetric-pairs}
A system of pairs $\P$ is called symmetric if for any two distinct unordered pairs $\{x,y\}$ and $\{\a,\b\}$ belonging to a subsystem $\P_l$ of $\P$ there exist subsystems $\P_i$ and $\P_j$ of $\P$ such that $\P_i$ contains the unordered pairs $\{x,\a\}$ and $\{y,\b\}$ while $\P_j$ contains $\{x,\b\}$ and $\{y,\a\}.$
\end{definition}
Let us assume the pairs in each subsystem are ordered in some way from 1 to $t$ and write $\P_j(i)$ for the pair number $i$ in the subsystem $\P_j.$
\begin{definition}
\label{def_simple-pairs}
A nontrivial symmetric system of pairs $\P$ with parameters $(v, n, t)$ on the set $V$ is called simple if for any integer $2<q\leqslant t$  and $q$-subsets $s_l=\{i_{l1}, \dots, i_{lq}\}$ of the set of indices $\{1, \dots, t\}$, we have  $f_{j_1}(s_1) = f_{j_2}(s_2)$ if and only if $j_1= j_2$ and $s_1=s_2$ for the following subsets $f_j(s_l)$  of $V$:
\begin{equation}
\label{f}
f_j(s_l) = \underset{m\in s_l}{\cup}\P_j(m), \quad j=1, \dots, n, \quad l=1, \dots, {t\choose q}.
\end{equation}
\end{definition}

The systems of pairs from Examples \ref{example_Fano},  \ref{example_designpairs2} and \ref{example_designpairs} below are simple.

\begin{theorem}
\label{thm_pairstoblocks}
Let $\P$ be a nontrivial symmetric system of pairs with parameters $(v,  n, t)$ built on the set $V=\{1, 2, \dots, v\}$. 
\begin{enumerate}
\item \label{1} Let  $x,y\in V$ be some fixed elements. There are precisely $2t-1$ subsystems of $\P$ each of which contains both $x$ and $y$ as elements of some pairs. 

\item
\label{2}
For $v\neq 4$, there exists a simple block design $\D_2(\P)$ with parameters 
\begin{equation}
\label{D2param}
v_2=v, \qquad b_2=\frac{n}{3}{t\choose 2}, \qquad r_2=\frac{1}{3}(v-1)(t-1), \qquad k_2=4, \qquad \lambda_2=t-1.
\end{equation}
\item
\label{3}
 If $\P$  is simple, then for any $2<q\leqslant t,\; v\neq 2q$ there exists a  simple  block design $\D_q(\P)$ with parameters 
\begin{equation*}
v_q=v, \qquad b_q=n{t\choose q},\qquad r_q=(v-1){{t-1}\choose {q-1}}, \qquad k_q=2q, \qquad \lambda_q = (2q-1){t-1\choose q-1}.
\end{equation*}
\end{enumerate}
\end{theorem}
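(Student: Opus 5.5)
The plan is to work throughout with one basic fact. Since every unordered pair of $V$ lies in exactly one subsystem, and no subsystem repeats an element, each element $x$ lies in exactly $v-1$ subsystems, one for each partner. For fixed $x,y$, let $\P_0$ be the unique subsystem containing the pair $\{x,y\}$.

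\emph{Part \ref{1}.} I will set up a bijection. A subsystem other than $\P_0$ that contains both $x$ and $y$ must contain them in two distinct pairs $\{x,\a\}$ and $\{y,\b\}$. Applying Definition \ref{def_symmetric-pairs} to these two pairs gives a subsystem containing $\{x,y\}$ and $\{\a,\b\}$; this subsystem must be $\P_0$. So $\{\a,\b\}$ is one of the $t-1$ pairs of $\P_0$ other than $\{x,y\}$.

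Conversely, I apply Definition \ref{def_symmetric-pairs} to $\{x,y\}$ and $\{\a,\b\}$ in $\P_0$. This produces a subsystem containing $\{x,\a\},\{y,\b\}$ and a subsystem containing $\{x,\b\},\{y,\a\}$. These two are distinct from each other, since a subsystem cannot contain both $\{x,\a\}$ and $\{x,\b\}$. Each is distinct from $\P_0$, since $x$ appears in $\P_0$ only through $\{x,y\}$. Each subsystem is determined by the pair through $x$ it contains. Hence the subsystems meeting both $x$ and $y$ are $\P_0$ together with exactly $2(t-1)$ others, giving $2t-1$ in total.

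\emph{Part \ref{3}.} I take as blocks the sets $f_j(s)$ of \eqref{f}, for $j=1,\dots,n$ and $s$ a $q$-subset of $\{1,\dots,t\}$. Each such set has $2q$ elements, since pairs within a subsystem are disjoint. Simplicity of $\P$ says exactly that these $n{t\choose q}$ sets are pairwise distinct, and $v\neq 2q$ gives $k_q<v$.

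\begin{itemize}
\item For $r_q$: $x$ lies in $v-1$ subsystems, and in each of them there are ${t-1\choose q-1}$ choices of $s$ containing the pair through $x$.
\item For $\lambda_q$: in $\P_0$, a block contains $x,y$ exactly when $s$ contains the index of $\{x,y\}$, giving ${t-1\choose q-1}$ blocks. In each of the other $2(t-1)$ subsystems from Part \ref{1}, $s$ must contain both the pair through $x$ and the pair through $y$, giving ${t-2\choose q-2}$ blocks.
\end{itemize}
Adding these, $\lambda_q={t-1\choose q-1}+2(t-1){t-2\choose q-2}=(2q-1){t-1\choose q-1}$.

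\emph{Part \ref{2}.} Here I would take the $4$-sets $\{x,y,\a,\b\}$ obtained as unions of two pairs $\{x,y\},\{\a,\b\}$ lying in a common subsystem. There are $n{t\choose 2}$ such "matched" unions. The key step, and the main obstacle, is to show that each $4$-set arising this way arises from exactly three matched pairs, and that the resulting design has no repeated blocks.
\begin{itemize}
\item Suppose $\{x,y\},\{\a,\b\}$ lie in a common subsystem. Then Definition \ref{def_symmetric-pairs} puts $\{x,\a\},\{y,\b\}$ together in some subsystem, and likewise $\{x,\b\},\{y,\a\}$. So all three perfect matchings of the $4$-set occur.
\item Each matching occurs in only one subsystem, because the pair $\{x,y\}$ determines its subsystem.
\item There are only three matchings of a $4$-set, so each $4$-set arises exactly three times.
\end{itemize}
Taking each $4$-set once then gives a simple collection of $b_2=\frac n3{t\choose 2}$ blocks, and $v\neq 4$ ensures $k_2=4<v$.

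For $\lambda_2$, I count matched unions containing $x,y$ using Part \ref{1}. In $\P_0$ there are $t-1$ of them, namely $\{x,y\}$ together with another pair. In each of the other $2(t-1)$ subsystems there is exactly one, formed by the pair through $x$ and the pair through $y$. This totals $3(t-1)$, and dividing by $3$ gives $\lambda_2=t-1$. Finally, $r_2$ follows from $r_2(k_2-1)=\lambda_2(v-1)$, which gives $r_2=\frac13(v-1)(t-1)$; this can also be checked directly.
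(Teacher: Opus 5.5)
Your proposal is correct and follows the paper's proof in all three parts. Part 1 uses the same two-direction symmetry argument, and you additionally make explicit, via the pair through $x$, why the $2(t-1)$ subsystems are distinct, which the paper leaves implicit. Part 3 uses the same blocks $f_j(s)$ and the same counts, and Part 2 uses the same union-of-two-pairs construction with each $4$-set arising exactly three times.

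The only variation is in Part 2. You get $\lambda_2$ by double counting the $3(t-1)$ matched unions through $\{x,y\}$ using Part 1, and then $r_2$ from $r_2(k_2-1)=\lambda_2(v-1)$. The paper instead shows directly that the $t-1$ unions from the subsystem containing $\{x,y\}$ already exhaust all blocks through $x,y$, and counts $r_2$ directly. Both routes are valid.
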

{\it Proof.}  The restrictions on $v$ in items \ref{2} and \ref{3} are imposed to avoid the case when $v_2=k_2$ or $v_q=k_q$ for the resulting design as, according to Definition \ref{def}, we do not consider designs with $k=v$ in this paper. If we decide to allow designs with $v=k$, then the statements and proofs of this theorem hold without restriction.  
\begin{enumerate}
\item The pair $\{x,y\}$ belongs to only one subsystem of $\P$, let us denote it by $\P_1.$ If $t>1$, there are pairs $\{x_2,y_2\}, \dots, \{x_t,y_t\}$ in $\P_1$ having no elements in common pairwise and not intersecting $\{x,y\}$. By symmetry of $\P$, for any $j=2, \dots, t$, the pairs $\{x,x_j\}$, $\{y, y_j\}$ belong to some subsystem that we denote $\P_j$ and the pairs $\{x,y_j\}$, $\{y, x_j\}$ belong to a subsystem that we denote $\P'_j$ with $\P_j\neq \P'_j.$ Thus there are at least $1+2(t-1)$ subsystems $\P_1, \P_j, \P'_j$ each containing $x$ and $y$ in their pairs.

Let us prove that there are no other subsystems containing both $x$ and $y$. Suppose there is a subsystem $\P_0$ containing two disjoint pairs $\{x,\a\}$ and $\{y,\b\}$. Due to symmetry of $\P$, there exists another subsystem containing pairs $\{x,y\}$, $\{\a,\b\}$, and by unicity of subsystem containing $\{x,y\}$,  this subsystem is $\P_1.$ Therefore there is an index $j\in\{2, \dots, t\}$ such that $\{\a,\b\}=\{x_j,y_j\}$ and thus $\P_0$ coincides with either $\P_j$ of $\P'_j$

\item Let us construct the design $\D_2(\P)$ from $\P$ as follows. In every subsystem $\P_j$ for $j=1, \dots, n,$ consider all possible unions of two distinct pairs from $\P_j$. These unions become blocks of $\D_2(\P)$. More precisely, 
for $\P_j=\{\{x_{j1}, y_{j1}\}, \dots, \{x_{jt}, y_{jt}\}\}$, define $B^j_{pq}=\{x_{jp}, y_{jp}, x_{jq}, y_{jq}\}$ for all $q< p$ with $1\leqslant p, q\leqslant t$. Given that no two pairs from the same subsystem have common elements, this is a well-defined procedure resulting in $t\choose 2$ distinct four-element sets for every $j$. Due to the symmetry of $\P$, the set of $B^j_{pq}$ for $j=1, \dots, n$ contains three copies of every four-element set. Removing repetitions, we obtain a set $\B$ of $\frac{n}{3}{t\choose 2}$ distinct blocks. 
Let us prove that  the set $\B$ is a set of blocks of a simple block design $\D_2(\P)$.

By definition, there is only one subsystem $\P_{j^*}$ in a system of pairs containing a given pair $\{x,y\}$. Therefore, the blocks containing $\{x,y\}$ are the  $B^{j^*}_{pq}$  obtained by unions of $\{x,y\}$ with all the remaining pairs of $\P_{j^*}$. This gives $t-1$ distinct blocks. Consider now another subsystem $\P_{j'}$ containing pairs $\{x,\a\}$ and $\{y,\b\}$. There is thus a four-element set $B^{j'}_{pq}=\{x,y,\a,\b\}$ containing the pair $\{x,y\}$. By symmetry of $\P$, there exists a subsystem containing the pairs $\{x,y\}$ and $\{\a,\b\}$. This subsystem is $\P_{j^*}$ and thus $B^{j'}_{pq}=\{x,y,\a,\b\}$ is already included in the set of $t-1$ distinct blocks obtained from the subsystem $\P_{j^*}$. This proves that every pair $\{x,y\}$ appears $\lambda:=t-1$ times in the elements of $\B$. 

Let us now calculate the number of occurrences of each element $x\in V$ in the blocks $B^j_{pq}$. The number of subsystems of $\P$ containing a pair $\{x,y\}$ for some  $y\in V$ is $v-1$. Consider one such subsystem $\P_{j^*}$. There are $t-1$ pairs in $\P_{j^*}$ which can be merged with the pair $\{x,y\}$ to form a four-element set $B^{j^*}_{pq}$. Thus, altogether, we obtain $(v-1)(t-1)$ four-element sets before identifying the identical ones. After this identification, given that every four-element set is obtained three times by merging pairs as described, we have that every element appears $r:=\frac{1}{3}(v-1)(t-1)$ times in the elements of $\B.$

\item The blocks of the design $\D_q(\P)$ are the sets of $2q$ elements $f_j(s_l)$  defined by \eqref{f}. Since $\P$ is simple, all these subsets of $V$ are distinct. Thus $b=n{t\choose q}.$ Let us count the number of occurrences of each  $x\in V$ in these blocks. A given element $x$ belongs to $v-1$ pairs. These pairs belong to $v-1$ blocks, one pair per block. Therefore $x$ appears in $r:=(v-1){t-1\choose q-1}$ blocks. 

Let us now consider an arbitrary pair $\{x,y\} \in V$. This pair belongs to one subsystem of $\P$, let us denote it by $\P_{j_*}$. This subsystem gives rise to $t-1\choose q-1$ blocks. Due to item \ref{1} of the present theorem, there remains $2(t-1)$ subsystems of $\P$ different from $\P_{j_*}$ that contain $x$ and $y$ in their pairs. Each of these $2(t-1)$ subsystems gives rise to $t-1 \choose q-2$ blocks containing $\{x,y\}$. Thus, the pair $\{x,y\}$ belongs to 
$\lambda:= {t-1\choose q-1}+2(t-1){t-2\choose q-2}$ blocks, which gives the required expression for $\lambda$. 
\end{enumerate}
\vspace{-0.4cm}
\hfill $\Box$

Note that one can reconstruct $\P$ from $\D_2(\P)$ defined in the proof of item \ref{2} of Theorem \ref{thm_pairstoblocks} by grouping each pair $\{x,y\}$ with all pairs $\{\a,\b\}$ such that $\{x,y,\a,\b\}$ is a block of $\D_2(\P)$.

\section{A system of pairs corresponding to a block design}
\label{sect_design-pairs}
In this section, we show that one can associate a system of pairs with some block designs. Note that the design $\d'$ obtained from $\D$ by the procedure of Theorem \ref{thm_main} may contain repeated blocks even if $\D$ is simple. The groups of identical blocks correspond to equivalence classes of pairs as defined by the following definition. 

\begin{definition}
\label{def_equivalence}
Given a block design $\D$, we say that two unordered pairs $\{x,y\}, \{\a,\b\}\subset V$ are equivalent to each other (with respect to $\D$) and we write $\{x,y\} \sim_\D \{\a,\b\}$, if  $\Z^+(x,y)=\Z^+(\a,\b),$ where $\Z^+=\Z^+(\D)$ is defined  by \eqref{zplus} for the design $\D$. 
\end{definition}
The equivalence relation from Definition \ref{def_equivalence} partitions the set $V^{(2)}$ of unordered pairs of elements of $V$ into $n$ equivalence classes for some natural number $n$.  Let us denote these equivalence classes by $\P_l(\D)$ and the set of the classes by $\P(\D)=\{\P_1(\D), \dots, \P_n(\D)\}$. 
The next definition extends  Definition \ref{def_symmetric-pairs} of symmetric systems of pairs to $\P(\D)$. 
\begin{definition}
\label{def_symmetry_classes}
Given a design $\D$, the set of equivalence classes $\P(\D)$ is called symmetric if for any two distinct unordered pairs $\{x,y\}$ and $\{\a,\b\}$ belonging to a class $\P_l(\D)$ there exist classes $\P_i(\D)$ and $\P_j(\D)$ such that $\P_i(\D)$ contains the unordered pairs $\{x,\a\}$ and $\{y,\b\}$ while $\P_j(\D)$ contains $\{x,\b\}$ and $\{y,\a\}.$
\end{definition}
\begin{lemma}
\label{lemma_t}
Let $\D$ be a design with parameters $(v, b,r, k, \lambda)$. 
Let $t_j$ denote the number of elements (pairs) in the equivalence class $\P_j(\D)$. Then $1\leq t_j\leq k$.
\end{lemma}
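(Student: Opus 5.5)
The lower bound $t_j\geq 1$ is immediate, since every equivalence class $\P_j(\D)$ is nonempty by construction. For the upper bound, the plan is to show two things about a fixed class $\P_j(\D)$: first, that its pairs are pairwise disjoint; second, that there is a single block of $\D$ meeting every pair of the class in exactly one element. These two facts together give an injection of the class into a block, which has $k$ elements.

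\emph{Disjointness.} Suppose two distinct pairs in the class share an element, say $\{x,y\}\sim_\D\{x,z\}$ with $y\neq z$. For every block $B_m$ we then have
\[
|\chi_{B_m}(x)-\chi_{B_m}(y)|=|\chi_{B_m}(x)-\chi_{B_m}(z)|.
\]
Since all values lie in $\{0,1\}$, this forces $\chi_{B_m}(y)=\chi_{B_m}(z)$ for every $m$. Hence $y$ and $z$ lie in exactly the same blocks, and the number of blocks containing both of them is $r$, so $\lambda=r$. On the other hand, the admissibility relation $r(k-1)=\lambda(v-1)$ together with $k<v$ gives $r>\lambda$. This is a contradiction, so the pairs of $\P_j(\D)$ are pairwise disjoint.

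\emph{A common block.} All pairs of the class share the same column vector $\Z^+(x,y)$. By the computation in the proof of Theorem \ref{thm_main}, this column contains exactly $k'=2(r-\lambda)>0$ ones. Choose an index $m$ with $\Z^+_m(x,y)=1$. Then every pair of the class has exactly one of its elements in $B_m$.

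\emph{Conclusion.} Assigning to each pair of the class its unique element lying in $B_m$ gives a map from $\P_j(\D)$ into $B_m$. This map is injective because the pairs are disjoint. Therefore $t_j\leq |B_m|=k$.

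The only step needing care is the disjointness argument, specifically excluding $\lambda=r$. This reduces to the standard fact that $r>\lambda$ whenever $k<v$, which follows from the admissibility relation.
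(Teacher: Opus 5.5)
Your proof is correct and follows the paper's argument: a block $B_m$ with a $1$ in the common column $\Z^+(x,y)$ meets each of the pairwise disjoint pairs of the class in exactly one element, so $t_j\le k$. You also justify two facts the paper takes for granted at this point: disjointness, which the paper proves only afterwards (item 2 of Lemma \ref{lemma_presystem}) by the same $r=\lambda$ contradiction, and the existence of such a block, via $k'=2(r-\lambda)>0$.
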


{\it Proof.} 
The class $\P_j(\D)$ contains $t_j$ disjoint pairs $\{x_\alpha, y_\alpha\}$ with $\alpha=1, \dots, t_j.$ There exists a block $B_i$ of $\D$ such that $\Z_i^+(x_\alpha, y_\alpha)=1$ for all $\alpha$. Thus $B_i$ contains exactly one element from each pair $\{x_\alpha, y_\alpha\}$ with $\alpha=1, \dots, t_j.$ Given that these pairs are disjoint and that $B_i$ contains exactly $k$ elements, we obtain that $t_j\leq k$.
\hfill $\Box$
\\

Let us characterize $\Z^+(x,y)$  alternatively as $\Z(x,y)$ from \eqref{Z} considered modulo two, that is viewed over the field $\mathbb F_2=\{0,1\}$.  Let $m_x\in \mathbb F_2^b$ be the (transposed) $x$-th row of $M$ read modulo 2, that is $(m_x)_j=\chi_{B_j}(x).$ Then 
\begin{equation}
\label{mod2}
\Z^+(x,y)=m_x+m_y \;\text{ in } \;\; \F^b.
\end{equation}
\begin{lemma}
\label{lemma_presystem}
Let $\D$ be a design with parameters $(v, b,r, k, \lambda)$ and $\{x,y\}\ne \{\a,\b\}$ be two unordered pairs of $V$. 
\begin{enumerate}
\item \label{presystem1}
$\{x,y\}\sim_\D\{\a,\b\}$ if and only if $m_x+m_y= m_\a+m_\b$ in $\F^b$;

\item 
\label{presystem2}
If $\{x,y\}\sim_\D\{\a,\b\}$, then no two elements of the set $\{\a,\b,x,y\}$ coincide;

\item
\label{presystem3}
 $\{x,y\}\sim_\D\{\a,\b\}$ if and only if $|\{x,y,\a,\b\}|=4$ and $|\{x,y,\a,\b\}\cap B_j|\in\{0,2,4\}$ for every $1\leqslant j\leqslant b.$

\end{enumerate}
\end{lemma}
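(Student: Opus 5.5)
The plan is to read everything off identity \eqref{mod2}, $\Z^+(x,y)=m_x+m_y$ in $\F^b$, so that the equivalence of Definition \ref{def_equivalence} becomes linear algebra over $\F$ on the rows of $M$. The three items are proved in order, each using the previous one.

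For item \ref{presystem1}, recall that $\Z^+$ has entries in $\{0,1\}$. The $j$th entry of $\Z^+(x,y)$ is $|\chi_{B_j}(x)-\chi_{B_j}(y)|$, which equals $\chi_{B_j}(x)+\chi_{B_j}(y)$ modulo $2$. Hence equality of the $0/1$ vectors $\Z^+(x,y)$ and $\Z^+(\a,\b)$ is the same as equality of their reductions in $\F^b$, i.e.\ $m_x+m_y=m_\a+m_\b$.

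For item \ref{presystem2}, suppose two elements of $\{x,y,\a,\b\}$ coincide. Since $x\neq y$, $\a\ne\b$ and the pairs are distinct, we may assume without loss of generality that $x=\a$ and $y\ne\b$. Item \ref{presystem1} then gives $m_y=m_\b$, so $y$ and $\b$ lie in exactly the same blocks. I would rule this out with \eqref{MMt}. Since $m_y=m_\b$, the number of blocks containing both $y$ and $\b$ is $r$, but it must equal $\lambda$. Thus $r=\lambda$, and $r(k-1)=\lambda(v-1)$ forces $k=v$, contradicting $k<v$. This step is the only one needing a nontrivial input, namely $r>\lambda$.

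For item \ref{presystem3}, the condition $m_x+m_y+m_\a+m_\b=0$ in $\F^b$ says, coordinatewise, that $|\{x,y,\a,\b\}\cap B_j|$ is even for each $j$, provided the four elements are distinct. So the forward direction follows from items \ref{presystem1} and \ref{presystem2}: distinctness, plus an even count in $\{0,1,2,3,4\}$, gives a value in $\{0,2,4\}$. Conversely, if the four elements are distinct and every intersection size is even, then the sum vanishes, and item \ref{presystem1} gives $\{x,y\}\sim_\D\{\a,\b\}$. I expect no real obstacle beyond the $r\neq\lambda$ argument in item \ref{presystem2}.
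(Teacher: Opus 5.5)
Your proposal is correct and follows the paper's proof essentially step by step. Item 1 comes from \eqref{mod2}. Item 2 is the same argument that $m_y=m_\beta$ would force $r=\lambda$, which together with $r(k-1)=\lambda(v-1)$ contradicts $k<v$. Item 3 combines the first two items with the coordinatewise parity reading of $m_x+m_y+m_\alpha+m_\beta=0$; your treatment of the case analysis in item 2 and of the reduction modulo $2$ in item 1 is slightly more explicit than the paper's.
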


{\it Proof.} 
\begin{enumerate}
\item This follows from \eqref{mod2} and Definition \eqref{def_equivalence}.

\item Let us suppose that the two pairs meet, for example $x=\a$. Then, by item \ref{presystem1}, we have that $m_y=m_\b$.  In this case, if $y\neq \b$, then these two elements do not appear in any block of $\D$ separately from one another. This implies that $r=\lambda$, which is impossible due to the relation $r(k-1)=\lambda(v-1)$ and the condition $k<v$.  We thus conclude that $y=\b$, which proves that $\{x,y\}\cap \{\a,\b\}\neq \emptyset$ implies $\{x,y\}=\{\a,\b\}$.

\item  If $\{x,y\}\sim_\D\{\a,\b\}$, then by item \ref{presystem2}, we have $|\{x,y,\a,\b\}|=4$. By item \ref{presystem1}, the sum of the four rows vanishes modulo two, that is every block contains an even number of the four elements. Conversely, if all four elements are distinct and $|\{x,y,\a,\b\}\cap B_j|\in\{0,2,4\}$ for every block $B_j$ of $\D$, then $m_x+m_y+m_\a+m_\b=0$ in $\F^b$ and thus, by item \ref{presystem1}, $\{x,y\}\sim_\D\{\a,\b\}$. 
\end{enumerate}
\hfill $\Box$

Due to item \ref{presystem2} of Lemma \ref{lemma_presystem}, if each equivalence class $\P_l(\D)$ contains the same number of elements (pairs), then $\P(\D)$ is a system of pairs. 
In Corollary \ref{cor_projective} below, we prove that $\P(\D)$ is a system of pairs if $\D$ is a projective plane. 
\\

While it is natural to suggest that if $\Z^+=\Z^+(\D)$ is an incidence matrix of a block design, then $\P(\D)$ is a system of pairs, this is not true as is shown in the next example.
\begin{example} 
\label{ex_counter}
Consider the following symmetric design $\D$  with parameters $(v=b=16, r=k=6, \lambda=2)$  The blocks of $\D$ are represented by columns: 
\begin{equation*}
\small
\begin{array}{cccccccccccccccc}
0 & 1 & 0 & 0 & 2 & 3 & 0 & 1 & 1 & 2 & 3 & 0& 0 & 1 & 2 & 3 \\
1 & 2 & 2 & 1 & 4 & 5 & 4 & 4 & 6 & 7 & 4 & 5& 5 & 6 & 7 & 4\\
2 & 3 & 3 & 3 & 5 & 6 & 6 & 5 & 8 & 9 & 8 & 8& 10 & 11 & 8 & 9 \\
4 & 5 & 6 & 7 & 6 & 7 & 7 & 7 & 9 & 10 & 10 & 9& 12 & 13 &12&12\\ 
9 & 10 & 11 & 8 & 8 & 9 & 10 & 11 & 10 & 11 & 11 & 11& 13 & 14 &14&13\\ 
14 & 15 & 12 & 13 & 13 & 14 & 15 & 12 & 12 & 13 & 14 & 15& 14 & 15 &15&15 
\end{array}
\end{equation*}
Due to the symmetry of $\D$, by Corollary \ref{cor_symmetric}, we obtain a design $D'$ from $\Z^+(\D)$. However, 
the equivalence relation of Definition \ref{def_equivalence} on 120 unordered pairs gives 54 classes of two different sizes: six classes with 4 pairs and forty-eight classes with 2 pairs.
\end{example}

On the other hand, in the next example, $\P(\D)$ is a system of pairs even though $\Z^+(\D)$ does not correspond to a block design. 
 Theorem \ref{thm_iff} below covers the case of this example. 
\begin{example} Consider the following simple design $\D$  with parameters $(v=8, b=14, r=7, k=4, \lambda=3)$ taken from \cite{handbook}, page 27, 1.21,  design 4. The blocks of $\D$ are represented by columns: 
\begin{equation*}
\small
\begin{array}{cccccccccccccc}
0 & 0 & 0 & 0 & 0 & 0 & 0 & 1 & 1 & 1 & 1 & 2& 2 & 4 \\
1 & 1 & 1 & 2 & 2 & 3 & 3 & 2 & 2 & 3 & 3 & 3& 3 & 5 \\
2 & 4 & 6 & 4 & 5 & 4 & 5 & 4 & 5 & 4 & 5 & 4& 6 & 6 \\
3 & 5 & 7 & 6 & 7 & 7 & 6 & 7 & 6 & 6 & 7 & 5& 7 & 7\end{array}.
\end{equation*}
There are two values of $\mu_{ij}$, namely $\mu_1=0$ and $\mu_2=2$. However, condition \eqref{quadratic} is not satisfied, so according to Theorem \ref{thm_main}, $\Z^+$ is not an incidence matrix of a block design. However, the equivalence relation on the pairs of elements of $V=\{1, \dots, 8\}$ induced by  $\Z^+$ as in Definition \eqref{def_equivalence}, gives rise to the following symmetric (but not simple) system of pairs: 
{\small
\begin{align*}
&\P_1(\D): \; \{0,1\}, \quad \{2,3\}, \quad \{4,5\}, \quad \{6,7\};
\\
&\P_2(\D): \; \{0,2\}, \quad \{1,3\}, \quad \{4,6\}, \quad \{5,7\};
\\
&\P_3(\D): \; \{0,3\}, \quad \{1,2\}, \quad \{4,7\}, \quad \{5,6\};
\\
&\P_4(\D): \; \{0,4\}, \quad \{1,5\}, \quad \{2,6\}, \quad \{3,7\};
\\
&\P_5(\D): \; \{0,5\}, \quad \{1,4\}, \quad \{2,7\}, \quad \{3,6\};
\\
&\P_6(\D): \; \{0,6\}, \quad \{1,7\}, \quad \{2,4\}, \quad \{3,5\};
\\
&\P_7(\D): \; \{0,7\}, \quad \{1,6\}, \quad \{2,5\}, \quad \{3,4\}.
\end{align*}
}
\end{example}
Examples \ref{example_Fano}-\ref{example_designpairs2} below give designs $\D$ for which matrices $\Z^+$ do correspond to block designs $\D'$ and $\P(D)$ are systems of pairs.

\begin{theorem}
\label{thm_symmetric}
For any design $\D$, the set of equivalence classes $\P(\D)$ is  symmetric. In particular, if $\P(\D)$ is a system of pairs, then it is a symmetric system of pairs.
\end{theorem}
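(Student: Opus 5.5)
The plan is to work entirely over $\F$ and use the characterization \eqref{mod2} together with item \ref{presystem1} of Lemma \ref{lemma_presystem}. Take two distinct unordered pairs $\{x,y\}$ and $\{\a,\b\}$ in the same class $\P_l(\D)$. By item \ref{presystem1} of Lemma \ref{lemma_presystem}, this means
\begin{equation*}
m_x+m_y=m_\a+m_\b \quad\text{in } \F^b,
\end{equation*}
or equivalently $m_x+m_y+m_\a+m_\b=0$. By item \ref{presystem2} of the same lemma, the four elements $x,y,\a,\b$ are pairwise distinct. Hence $\{x,\a\}$, $\{y,\b\}$, $\{x,\b\}$ and $\{y,\a\}$ are all genuine unordered pairs of distinct elements of $V$. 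Each of them therefore lies in exactly one equivalence class, since the classes partition $V^{(2)}$.

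The key step is to regroup the vanishing sum of the four vectors. Adding $m_y+m_\a$ to both sides of the displayed identity, and using that every vector is its own negative in characteristic two, gives $m_x+m_\a=m_y+m_\b$. By item \ref{presystem1} of Lemma \ref{lemma_presystem} this says $\{x,\a\}\sim_\D\{y,\b\}$, so both pairs lie in a common class $\P_i(\D)$. Regrouping the same sum the other way gives $m_x+m_\b=m_y+m_\a$, so $\{x,\b\}\sim_\D\{y,\a\}$, and these two pairs lie in a common class $\P_j(\D)$. This is exactly the condition of Definition \ref{def_symmetry_classes}.

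For the second assertion, note that if $\P(\D)$ is a system of pairs, its subsystems are precisely the classes $\P_l(\D)$. In that case Definition \ref{def_symmetry_classes} coincides with Definition \ref{def_symmetric-pairs}, so $\P(\D)$ is a symmetric system of pairs.

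There is no real obstacle in this argument. The only point that needs care is distinctness: the four new pairs must consist of distinct elements, so that they are honest elements of $V^{(2)}$. This is supplied by item \ref{presystem2} of Lemma \ref{lemma_presystem}, which in turn rests on the fact that $r\neq\lambda$.
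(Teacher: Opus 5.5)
Your proof is correct and follows the paper's argument: both reduce symmetry to regrouping the identity $m_x+m_y+m_\a+m_\b=0$ in $\F^b$ using item 1 of Lemma \ref{lemma_presystem}. You also state explicitly that item 2 of that lemma guarantees the four new pairs consist of distinct elements, a point the paper leaves implicit.
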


{\it Proof.} Using notation $m_x$ from \eqref{mod2},  we need to prove that if for four distinct elements $x,y,\a,\b$,  we have $m_x+m_y=m_\a+m_\b$ in $\F^b$ then we also have $m_x+m_\a=m_y+m_\b$ and $m_x+m_\b=m_\a+m_y$. This is satisfied   in $\F^b$.
 
\hfill $\Box$

\begin{lemma}
\label{lemma_neclambda1}
Let $\D$ be a design with parameters $(v, b,r, k, \lambda=1)$. If there exist two pairs of elements of $V$ which are equivalent to each other in the sense of Definition \ref{def_equivalence}, then $r=3.$
\end{lemma}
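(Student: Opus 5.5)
We plan to translate equivalence into a parity condition on blocks via Lemma \ref{lemma_presystem}, and then count the blocks through one fixed element using $\lambda=1$.

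Suppose $\{x,y\}\sim_\D\{\a,\b\}$ with $\{x,y\}\neq\{\a,\b\}$. By item \ref{presystem3} of Lemma \ref{lemma_presystem}, the four elements $x,y,\a,\b$ are distinct, and every block $B_j$ satisfies $|\{x,y,\a,\b\}\cap B_j|\in\{0,2,4\}$. We fix the element $x$ and look at the $r$ blocks containing it. Each such block meets $S=\{x,y,\a,\b\}$ in an even number of elements, at least one of which is $x$. Hence it contains either exactly one of $y,\a,\b$, or all three of them.

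First we rule out a block containing all of $S$. Suppose some block $B$ contains $S$. Since $\lambda=1$, the pairs $\{x,y\},\{x,\a\},\{x,\b\}$ each lie in exactly one block, and that block is $B$. Every other block through $x$ then meets $S$ in exactly $\{x\}$, which has odd size $1$ and is forbidden. So $x$ lies only in $B$, which gives $r=1$. Combining this with $r(k-1)=\lambda(v-1)=v-1$ forces $k=v$, contradicting $k<v$. So no block contains all of $S$.

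Therefore each of the $r$ blocks through $x$ contains exactly one of $y,\a,\b$. Conversely, each of the pairs $\{x,y\},\{x,\a\},\{x,\b\}$ lies in exactly one block, because $\lambda=1$. This gives a bijection between the blocks through $x$ and the set $\{y,\a,\b\}$, so $r=3$.

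The only delicate step is excluding the case of a block containing all four elements, and the relation $r(k-1)=\lambda(v-1)$ together with $k<v$ handles it.
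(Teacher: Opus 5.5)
Your proof is correct and follows essentially the same line as the paper's: it uses the parity condition from item~\ref{presystem3} of Lemma~\ref{lemma_presystem}, rules out a block containing all of $\{x,y,\a,\b\}$ via $r>\lambda$ (equivalently $k<v$), and uses the fact that each pair lies in a unique block when $\lambda=1$. The only difference is what gets counted. The paper shows that exactly four blocks separate $x$ from $y$, one for each of $\{x,\a\},\{x,\b\},\{y,\a\},\{y,\b\}$, and equates this with the column weight $k'=2(r-\lambda)$ of $\Z^+$. You count the $r$ blocks through $x$ directly, which gives $r=3$ without invoking that formula and is slightly more economical.
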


{\it Proof.} Suppose that $\{x,y\}\sim_\D \{\a,\b\}$ and $\{x,y\}\ne \{\a,\b\}$. This means that for every $j=1, \dots, v$, we have that $|\chi_{B_j}(x) - \chi_{B_j}(y)|=1$ if and only if $|\chi_{B_j}(\a) - \chi_{B_j}(\b)|=1$. This implies that
for every $j$ such that $\Z_j(x,y)=1$ we have $\{x, y\}\not\subset B_j$, $\{\a, \b\}\not\subset B_j$ and one of the following four cases: 
\begin{equation}
\label{four-pairs}
1.\, \{x, \a\}\subset B_j, \qquad
2.\, \{x, \b\}\subset B_j, \qquad
3.\, \{y, \a\}\subset B_j, \qquad
4.\,  \{y, \b\}\subset B_j.
\end{equation}

Given that each pair appears exactly once in the blocks of $\D$,  if there exists a block $B_{j_*}$ such that  $\{x,y\}\subset B_{j_*}$ and $\{\a,\b\}\subset B_{j_*}$, then none of the four pairs \eqref{four-pairs} may appear in blocks different from $B_{j_*}$. Due to the equivalence of the pairs $\{x,y\}$, $\{\a,\b\}$, this implies that $\Z^+(x,y)=\Z^+(\a,\b)=0.$ However,  the vector $\Z^+(x,y)$ is nonzero for every $(x,y)$ since $k<v$ and thus $r>\lambda$. Therefore such a block $B_{j_*}$ does not exist.

Moreover, no block of $\d$ may contain exactly three elements of the set $\{x, y, \a, \b\}$ as in such a case the pairs $\{x,y\}$, $\{\a,\b\}$ would not be equivalent, see item \ref{presystem3} of Lemma \ref{lemma_presystem}. 

Thus the four pairs \eqref{four-pairs} appear in exactly four different blocks, one pair per block. We conclude that the number of nonzero components  of $\Z^+(x,y)$ is 4. On the other hand, due to Theorem \ref{thm_main}, this number is $k'=2(r-\lambda)$. We thus obtain that $2(r-1)=4$ or $r=3.$
\hfill $\Box$

\begin{corollary}
\label{cor_projective}
Let $\d$ be a finite projective plane. Then $\P(\d)$ is a system of pairs. However, the only finite projective plane that gives rise to a nontrivial system of pairs is the Fano plane.
\end{corollary}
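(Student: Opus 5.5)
We split the statement into its two claims. For the first claim, we need every equivalence class $\P_l(\d)$ to have the same size; by item \ref{presystem2} of Lemma \ref{lemma_presystem}, equal sizes are enough to make $\P(\d)$ a system of pairs.

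Take a projective plane with $\lambda=1$ and $r=k=q+1$. If $r\neq 3$, Lemma \ref{lemma_neclambda1} says no two distinct pairs are equivalent. Then every class is a singleton, so $\P(\d)$ is the trivial system of pairs with $t=1$ and $n=v(v-1)/2$. This is still a system of pairs under Definition \ref{def_pairs}, but it is trivial.

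This leaves the case $r=3$, which forces $k=3$, $v=b=7$: the Fano plane, unique up to isomorphism. Here we verify directly that the classes are triples, using item \ref{presystem3} of Lemma \ref{lemma_presystem}. Two distinct pairs $\{x,y\}\sim\{\a,\b\}$ precisely when the four points are distinct and meet every line in an even number of points. In the Fano plane the complement of a line is a 4-set meeting every line in exactly two points, since two lines of a projective plane meet in one point. Conversely, an "even" 4-set contains no line. A 4-set containing no line has complement a 3-set, which must then be a line, because a 4-set containing no line is a complement of a line in $PG(2,2)$.

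That converse is the step I expect to need care. I would prove it by counting: a 4-set $S$ meeting every line in $0$, $2$ or $4$ points cannot meet any line in $0$ points. Counting incidences gives $4\cdot 3=12=\sum_L |S\cap L|$ over the 7 lines. Counting pairs in $S$ gives $\sum_L \binom{|S\cap L|}{2}=\binom42=6$. If $|S\cap L|\in\{0,2\}$ for every line (4 is impossible since $k=3$), the first count says six lines meet $S$ in 2 points, and the second is consistent. The remaining line meets $S$ in 0 points, so its three points lie in the complement of $S$, which has size 3. Hence $S$ is the complement of a line.

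So the nontrivial classes come from the 7 line-complements (quadrangles). Each quadrangle splits into 3 perfect matchings, each giving one pair of equivalent pairs. Each pair $\{x,y\}$ lies in exactly one quadrangle, since $\{x,y\}$ lies on a unique line $L$ and the quadrangles containing both $x,y$ are complements of lines avoiding $x,y$. Counting shows there are exactly two such lines, each with 4 such complements. To settle this, I would fix $x,y$ on line $L$ with third point $z$. A line avoids $x$ and $y$ iff it is not one of the 5 lines through $x$ or $y$, so exactly 2 lines avoid both, not one.

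This means a pair could lie in two quadrangles. That would make classes of size 3: $\{x,y\}$ equivalent to $\{\a,\b\}$ and $\{\gamma,\delta\}$, with $\{\a,\b,\gamma,\delta\}$ then being the intersection line-complement pattern. This matches Lemma \ref{lemma_t}, which allows $t\le k=3$. I would therefore aim to show every class has exactly 3 pairs, giving parameters $(7,7,3)$. The argument: for each pair, the two lines avoiding it meet in a point $w$ and contribute pairs $\{\a,\b\}$, $\{\gamma,\delta\}$ from the remaining four points, which are mutually equivalent by transitivity. So classes are triples partitioning the 21 pairs. The main obstacle is this careful Fano-plane bookkeeping, which can finally be checked by listing the 7 classes explicitly.
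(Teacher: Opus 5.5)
Your proposal is correct. For every plane other than the Fano plane it follows the paper's argument: Lemma \ref{lemma_neclambda1} forces $r=3$, hence $v=b=7$, and otherwise every class is a singleton.

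The difference is in the Fano case. The paper's proof simply points to Example \ref{example_Fano}, where the seven classes are listed explicitly. You instead derive them. By item \ref{presystem3} of Lemma \ref{lemma_presystem}, together with the incidence count $\sum_L|S\cap L|=12$ and $|S\cap L|\in\{0,2\}$ for every line, the set $\Q_4$ consists exactly of the seven line complements. A pair $\{x,y\}$ lies in the complements of the $7-5=2$ lines missing both points, so its class has exactly $1+2=3$ pairs.

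This is in effect Theorem \ref{thm_iff}, with $\Q_4$ the complementary $(7,7,4,4,2)$ design and $\lambda_4=2$. It explains the parameters $(7,7,3)$ rather than reading them off a table. It also uses only the incidence axioms of a plane of order $2$, so uniqueness of the Fano plane is not needed for this part. One more step gives a clean description: the two lines missing $x,y$ are the other two lines through the third point $z$ of the line $xy$. Hence the class of $\{x,y\}$ is $\{\ell\setminus\{z\} : z\in\ell\}$, i.e.\ the seven classes are indexed by the points. The paper's route is shorter but rests entirely on the explicit example.

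In a write-up, delete the interim claims you later retract, since as written they contradict your conclusion:
\begin{itemize}
\item that an even $4$-set ``cannot meet any line in $0$ points'' (exactly one line misses it);
\item the circular sentence that justifies the converse by restating it;
\item that each pair lies in exactly one quadrangle (it lies in two).
\end{itemize}
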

{\it Proof.} Given the relations $bk=vr$ and $\lambda(v-1) = r(k-1)$,  in the case of a projective plane ($v=b$, $r=k$ and $\lambda=1$), Lemma \ref{lemma_neclambda1} implies that if there exist equivalent pairs, then  $k=r=3$ and $v=b=7$.  
There exists a unique design with these parameters, the Fano plane. Therefore, for all other projective planes $\D$, the conditions of Lemma \ref{lemma_neclambda1} are not satisfied, that is there are no two pairs of elements (of points of the plane) that are equivalent to each other in the sense of Definition \ref{def_equivalence}. Thus, the equivalence classes of pairs $\P_l(\D)$ contain exactly one pair each giving a trivial system of pairs. 
In Example \ref{example_Fano}, we show  that the Fano plane  does give rise to a nontrivial system of pairs. 
\hfill $\Box$

\begin{example}
\label{example_Fano}
Let $\D$ be the Fano plane, that is the design with parameters $(v=b=7, r=k=3, \lambda=1)$. The system of pairs $\P(\D)$ has parameters (v=7, n=7, t=3) and is as follows: 
{\small
\begin{align*}
&\P_1(\D): \; \{0,1\}, \quad \{2,5\}, \quad \{4,6\};\qquad &\P_5(\D): \; \{0,5\}, \quad \{1,2\}, \quad \{3,6\};
\\
&\P_2(\D): \; \{0,2\}, \quad \{1,5\}, \quad \{3,4\}; \qquad &\P_6(\D): \; \{0,6\}, \quad \{1,4\}, \quad \{3,5\};
\\
&\P_3(\D): \; \{0,3\}, \quad \{2,4\}, \quad \{5,6\}; \qquad &\P_7(\D): \; \{1,3\}, \quad \{2,6\}, \quad \{4,5\}.
\\
&\P_4(\D): \; \{0,4\}, \quad \{1,6\}, \quad \{2,3\};
\end{align*}
}
\end{example}

Let us now characterize designs $\D$ such that $\P(\D)$ is a system of pairs.
 Suppose $\P(\D)$ is a nontrivial system of pairs, then we can construct the design $\D_2(\P(\D))$ from the proof of item \ref{2} of Theorem \ref{thm_pairstoblocks}. The blocks of the design $\D_2(\P(\D))$ are precisely all the four-element sets having either 0, 2 or 4 elements in common with each block of $\D$.
 The next theorem shows that the collection of such four-element sets being a block design gives also a sufficient condition for $\P(\D)$ to be a system of pairs, independently of whether or not $\D'$ is a block design.

Denote by $\Q_4$ the following set  of four-element subsets of $V$:
\begin{equation}
\label{q4}
\Q_4 = \{ S\subseteq V \;:\; |S|=4 \;\; \text{ and }\;\; |S\cap B_j|\equiv 0 \,({\rm mod}\, 2)\;\; \text{ for all }\;\; 1\leqslant j\leqslant b \}. 
\end{equation}
\begin{theorem}
\label{thm_iff}
Let $v\neq 4$ and let $\D$ be a design with parameters $(v, b,r, k, \lambda)$. Then $\P(\D)$ is a system of pairs with parameters $(v, n, t)$ if and only if either $\Q_4 = \emptyset$ (then $\P(\D)$
is the trivial system, $t=1$), or $\Q_4$ is the block set of a design, necessarily with parameters $(v_4=v, k_4=4, \lambda_4=t-1>0)$.
In that case all subsystems of $\P(\D)$ have size $t= 1 + \lambda_4$.
\end{theorem}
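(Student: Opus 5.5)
The plan is to link the classes of $\P(\D)$ with the sets in $\Q_4$ using Lemma \ref{lemma_presystem}, item \ref{presystem3}. Two distinct pairs $\{x,y\}\ne\{\a,\b\}$ are equivalent exactly when $S=\{x,y,\a,\b\}$ has four elements and $S\in\Q_4$. Conversely, every $S\in\Q_4$ splits into three ways as a union of two disjoint pairs. For any one of these splittings, say $S=\{x,y\}\cup\{\a,\b\}$, the condition $m_x+m_y+m_\a+m_\b=0$ gives $\{x,y\}\sim_\D\{\a,\b\}$; this is the same computation as in Theorem \ref{thm_symmetric}. Hence, for a fixed pair $\{x,y\}$, the pairs equivalent to it other than itself correspond bijectively to the sets $S\in\Q_4$ containing $\{x,y\}$, via $\{\a,\b\}\mapsto\{x,y,\a,\b\}$. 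This is a bijection because $\{\a,\b\}=S\setminus\{x,y\}$. So the size of the class of $\{x,y\}$ equals $1+N(x,y)$, where $N(x,y)$ is the number of members of $\Q_4$ containing $\{x,y\}$. This counting identity is the core of both directions.

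For the ``if'' direction, first suppose $\Q_4=\emptyset$. Then every class is a singleton, and $\P(\D)$ is the trivial system with $t=1$ and $n=v(v-1)/2$; no pair repeats an element, so Definition \ref{def_pairs} holds. Next suppose $\Q_4$ is the block set of a design with $k_4=4$. Then every pair lies in exactly $\lambda_4$ of its blocks, so $N(x,y)=\lambda_4$ for all pairs. Every class therefore has size $t=1+\lambda_4$. By Lemma \ref{lemma_presystem}, item \ref{presystem2}, the pairs within a class are disjoint, so $\P(\D)$ is a system of pairs, and $nt=v(v-1)/2$ follows automatically from the partition.

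For the ``only if'' direction, suppose $\P(\D)$ is a system of pairs with parameters $(v,n,t)$, so all classes have size $t$. Then $N(x,y)=t-1$ for every pair. If $t=1$, then $\Q_4=\emptyset$, since any $S\in\Q_4$ yields two distinct equivalent pairs. If $t>1$, then $\Q_4$ is a nonempty family of distinct $4$-subsets in which every pair of $V$ lies in exactly $\lambda_4=t-1$ members. It remains to see that this is a BIBD in the sense of Definition \ref{def}. Constant $r_4$ follows from the standard count $r_4\cdot 3=\lambda_4(v-1)$, obtained by double counting pairs through a fixed point. We also need $k_4=4<v$: here $v\ge4$ because $\Q_4\neq\emptyset$, and $v\neq4$ is assumed. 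Alternatively, one may invoke Theorem \ref{thm_pairstoblocks}, item \ref{2}, together with Theorem \ref{thm_symmetric}, noting that the blocks of $\D_2(\P(\D))$ are exactly $\Q_4$ (as remarked before the theorem).

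The main obstacle is the bijection step, in particular checking that each $S\in\Q_4$ is counted exactly once for a given pair $\{x,y\}\subset S$. This needs the fact that membership in $\Q_4$ is a property of the set $S$, independent of how it is split into two pairs; the symmetric mod-2 identity settles this. The remaining steps are routine bookkeeping.
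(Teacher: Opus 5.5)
Your proof is correct and follows the paper's argument. Both directions rest on item \ref{presystem3} of Lemma \ref{lemma_presystem}, which identifies the class of $\{x,y\}$ with $\{x,y\}$ together with the sets $S\setminus\{x,y\}$ for $S\in\Q_4$ containing $\{x,y\}$, so every class has size $1+\lambda_4$. You are more explicit than the paper in checking that $\Q_4$ has constant replication number $r_4=\lambda_4(v-1)/3$ and $k_4<v$ (the paper defers this to the construction of $\D_2(\P(\D))$), and in handling the case $\Q_4=\emptyset$ in the ``if'' direction.
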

{\it Proof.}
If $\P(\D)$ is a nontrivial system of pairs and
every class has size $t>1$, then by item \ref{presystem3} of Lemma \ref{lemma_presystem} every pair lies in exactly $t-1$ members of $Q_4$,
so $\lambda_4=t-1.$ (In this case, the construction of $\D_2(\P(\D))$ in the proof of item \ref{2} of Theorem \ref{thm_pairstoblocks}, implies that 
 $\Q_4$ is the set of blocks of the design $\D_2(\P(\D))$.)
For a trivial system of pairs, \ref{3} of Lemma \ref{lemma_presystem} implies that $\Q_4=\emptyset.$

Suppose now that $\Q_4$  forms a set of blocks of a design with parameters $(v_4=v, k_4=4, \lambda_4)$. Then for a pair $\{x,y\}$, we have $\lambda_4=|\{Q\in\Q_4\;:\; \{x,y\}\subset Q\}|.$ By item \ref{presystem3} of Lemma \ref{lemma_presystem}, the class of $\{x,y\}$ in $\P(\D)$ consists of the pair $\{x,y\}$ and all pairs $\{\a, \b\}\neq\{x,y\}$ such that
$\{x,y,\a,\b\}\in\Q_4.$ Thus the size of this class is $1+\lambda_4$, which is independent of $\{x,y\}$ 
given that $\Q_4$ is a block design. This shows that all equivalence classes of $\P(\D)$ are of equal size. Moreover, by item \ref{presystem2} of Lemma \ref{lemma_presystem},  no element of $V$ is contained in a  class in $\P(\D)$ more than once.

Condition $v\neq 4$ is included for the same reason as in item \ref{2} of Theorem \ref{thm_pairstoblocks}. 
\hfill $\Box$ 
\\

The next proposition describes a class of block designs $\D$ for which $\P(D)$ is a system of pairs. 
\begin{proposition}
\label{prop_auto}
Let $\D$ be a design with parameters $(v, b,r, k, \lambda)$. If there is a subgroup $G< {\rm Aut}(\D)$ of the automorphism group of $\D$ acting transitively on the set of unordered pairs $V^{(2)}$, then $\P(\D)$ is a system of pairs. 
\end{proposition}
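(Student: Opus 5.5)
The plan is to show directly that every equivalence class $\P_l(\D)$ has the same cardinality. By item \ref{presystem2} of Lemma \ref{lemma_presystem}, the pairs in a class are pairwise disjoint, so once all classes have equal size $t$, $\P(\D)$ is a system of pairs; Theorem \ref{thm_symmetric} then also makes it symmetric.

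The first step is to check that automorphisms respect the equivalence relation. Let $g\in{\rm Aut}(\D)$, a permutation of $V$ that permutes the blocks, $g(B_j)=B_{\sigma(j)}$ for some permutation $\sigma$ of $\{1,\dots,b\}$. Then $\chi_{B_{\sigma(j)}}(g(x))=\chi_{B_j}(x)$, hence
\[
\Z^+_{\sigma(j)}(g(x),g(y))=\Z^+_j(x,y).
\]
So $\Z^+(g(x),g(y))$ is obtained from $\Z^+(x,y)$ by the coordinate permutation $\sigma$, which is the same for all pairs. It follows that $\{x,y\}\sim_\D\{\a,\b\}$ if and only if $\{g(x),g(y)\}\sim_\D\{g(\a),g(\b)\}$. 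Alternatively, one can argue with the criterion in item \ref{presystem3} of Lemma \ref{lemma_presystem}: $g$ maps four-element sets meeting every block evenly to sets with the same property, because it permutes the blocks. In either form, $g$ maps each class $\P_l(\D)$ bijectively onto a class $\P_{l'}(\D)$.

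Next, take two classes $\P_i(\D)$ and $\P_j(\D)$, and pick representatives $\{x,y\}\in\P_i(\D)$ and $\{\a,\b\}\in\P_j(\D)$. By transitivity of $G$ on $V^{(2)}$, there is $g\in G$ with $g(\{x,y\})=\{\a,\b\}$. By the first step, $g(\P_i(\D))$ is a class containing $\{\a,\b\}$, so it equals $\P_j(\D)$. Since $g$ is injective, $|\P_i(\D)|=|\P_j(\D)|$. Hence every class has the same size $t$, and $nt=v(v-1)/2$ holds automatically because the classes partition $V^{(2)}$.

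The only delicate point is the first step: one must check that the coordinate permutation $\sigma$ is the same for both pairs. It is, since it depends only on $g$ and not on the pair. Repeated blocks cause no trouble, because any block permutation $\sigma$ realizing $g$ works.
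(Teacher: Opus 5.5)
Your proposal is correct and follows essentially the same argument as the paper. An automorphism $\phi$ induces a block permutation $\pi$ with $\Z^+_{\pi(j)}(\phi(x),\phi(y))=\Z^+_j(x,y)$, so it maps classes of $\P(\D)$ bijectively onto classes; transitivity of $G$ on $V^{(2)}$ then forces all classes to have equal size, and disjointness within classes comes from item \ref{presystem2} of Lemma \ref{lemma_presystem}, as in the remark following that lemma.
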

{\it Proof.} For an automorphism $\phi\in {\rm Aut}(\D)$ let $\pi_\phi$ denote the induced permutation on the set of blocks: $B_{\pi_\phi(j)} = \phi(B_j)$. Then we have for the rows of the incidence matrix: $(m_x)_j=(m_{\phi(x)})_{\pi_\phi(j)}$ and thus $Z^+_{\pi_\phi(j)}(\phi(x), \phi(y)) = Z^+_j(x,y).$ This implies that $\{x,y\}\sim_\D\{\a, \b\}$ if and only if $\{\phi(x),\phi(y)\}\sim_\D\{\phi(\a), \phi(\b)\}.$ Therefore, $\phi$ permutes classes in $\P(\D)$. Moreover, denoting $\rho_\phi$ the induced permutation, $|P_l(\D)|=|P_{\rho_\phi(l)}(\D)|$.

Given that $\P(\D)$ is a partition of the set of unordered pairs $V^{(2)}$ and $G$ acts transitively on $V^{(2)}$, we conclude that $G$ acts transitively on the classes in $\P(\D)$. Indeed, for $\{x, y\}\in\P_s(\D)$ and $\{x', y'\}\in\P_q(\D)$, there is $\phi\in G$ such that $\{x',y'\}=\{\phi(x), \phi(y)\}$ and thus $q=\rho_\pi(s)$. This shows that all classes in  $\P(\D)$ are of the same size. 
\hfill $\Box$
\\

Let us conclude this section with some more examples of systems of pairs arising from block designs. 
\begin{example}
\label{example_designpairs2}
The symmetric design $\D$ with parameters $(v=b=16,  r=k=6, \lambda=2)$ from \cite{Hall}, (14.1.15) on p. 241, gives a design $D'$ with parameters $(v'=16, b'=120, r'=60, k'=8, \lambda'=28)$. The corresponding  equivalence relation for the pairs on the set $V=\{1, \dots, 16\}$ gives the following symmetric 
system of pairs:
\begingroup
\allowdisplaybreaks
\begin{eqnarray*}
\small
\begin{array}{ccccc}
\P_1(\D): &\!\!\!\!\{0, 1\},  &  \{3, 6\}, &  \{4, 7\},  &  \{10, 13\}; \\
\P_2(\D): &\!\!\!\!\{0, 2\},  &  \{3, 8\}, &  \{4, 9\},  &  \{10, 14\}; \\
\P_3(\D): &\!\!\!\!\{0, 3\}, &  \{1, 6\}, &  \{2, 8\},  &  \{5, 11\}; \\
\P_4(\D): &\!\!\!\!\{0, 4\}, &  \{1, 7\}, &  \{2, 9\},  &  \{5, 12\}; \\
\P_5(\D): &\!\!\!\!\{0, 5\}, &  \{3, 11\}, &  \{4, 12\},  &  \{10, 15\}; \\
\P_6(\D): &\!\!\!\!\{0, 6\}, &  \{1, 3\},  &  \{9, 15\}, &  \{12, 14\}; \\
\P_7(\D): &\!\!\!\!\{0, 7\}, &  \{1, 4\},  &  \{8, 15\}, &  \{11, 14\}; \\
\P_8(\D): &\!\!\!\!\{0, 8\}, &  \{2, 3\},  &  \{7, 15\}, &  \{12, 13\}; \\
\P_9(\D): &\!\!\!\!\{0, 9\},  &  \{2, 4\},  &  \{6, 15\}, &  \{11, 13\}; \\
\P_{10}(\D): &\!\!\!\!\{0, 10\}, &  \{1, 13\}, &  \{2, 14\},  &  \{5, 15\}; \\
\P_{11}(\D): &\!\!\!\!\{0, 11\}, &  \{3, 5\}, &  \{7, 14\}, &  \{9, 13\}; \\
\P_{12}(\D): &\!\!\!\!\{0, 12\},  &  \{4, 5\}, &  \{6, 14\}, &  \{8, 13\}; \\
\P_{13}(\D): &\!\!\!\!\{0, 13\}, &  \{1, 10\},  &  \{8, 12\}, &  \{9, 11\}; \\
\P_{14}(\D): &\!\!\!\!\{0, 14\},  &  \{2, 10\},  &  \{6, 12\}, &  \{7, 11\}; \\
\P_{15}(\D): &\!\!\!\!\{0, 15\}, &  \{5, 10\}, &  \{6, 9\}, &  \{7, 8\}; \\
\end{array}
\qquad
\begin{array}{ccccc}
\P_{16}(\D): &\!\!  \{2, 5\},  &  \{8, 11\}, &  \{9, 12\},  &  \{14, 15\}; \\
\P_{17}(\D):  &\!\!  \{1, 5\},  &  \{6, 11\}, &  \{7, 12\}, &  \{13, 15\}; \\
\P_{18}(\D):  &\!\!  \{4, 10\},  &  \{7, 13\}, &  \{9, 14\}, &  \{12, 15\}; \\
\P_{19}(\D):  &\!\!  \{3, 10\},  &  \{6, 13\}, &  \{8, 14\}, &  \{11, 15\}; \\
\P_{20}(\D):  &\!\!  \{1, 2\},  &  \{6, 8\}, &  \{7, 9\},  &  \{13, 14\}; \\
\P_{21}(\D): &\!\!  \{2, 11\}, &  \{4, 13\}, &  \{5, 8\}, &  \{7, 10\}; \\
\P_{22}(\D):  &\!\!  \{2, 12\}, &  \{3, 13\}, &  \{5, 9\}, &  \{6, 10\}; \\
\P_{23}(\D):  &\!\!  \{1, 11\},  &  \{4, 14\}, &  \{5, 6\},  &  \{9, 10\}; \\
\P_{24}(\D): &\!\!  \{1, 12\},  &  \{3, 14\}, &  \{5, 7\},  &  \{8, 10\}; \\
\P_{25}(\D):  &\!\!  \{3, 4\},  &  \{6, 7\}, &  \{8, 9\}, &  \{11, 12\}; \\
\P_{26}(\D):  &\!\!  \{1, 8\}, &  \{2, 6\},  &  \{4, 15\},  &  \{10, 12\}; \\
\P_{27}(\D):  &\!\!  \{1, 9\}, &  \{2, 7\}, &  \{3, 15\},   &  \{10, 11\}; \\
\P_{28}(\D):  &\!\!  \{2, 15\}, &  \{3, 7\}, &  \{4, 6\}, &  \{5, 14\}; \\
\P_{29}(\D):  &\!\!  \{1, 15\}, &  \{3, 9\}, &  \{4, 8\}, &  \{5, 13\}; \\
\P_{30}(\D):  &\!\!  \{1, 14\}, &  \{2, 13\}, &  \{3, 12\}, &  \{4, 11\}. \\
\end{array}
\end{eqnarray*}
\endgroup
\end{example}

If $\D$ is such that $\Z^+$ is an incidence matrix for a block design $\D'$ and  $\P(\D)$ is a nontrivial system of pairs, then $\D'$ contains repeated blocks. Moreover, every set of identical blocks of $\D'$ contains the same number of blocks, let us denote it by $t>1$. In this case, identifying all identical blocks of $\D'$, that is only keeping one block in each set of identical blocks, we obtain another design $\tilde\D'$. If $\D'$ has parameters $(v', b', r', k', \lambda')$ then $\tilde\D'$ has parameters $(\tilde v'=v', \tilde b'=\frac{b'}{t}, \tilde r'=\frac{r'}{t}, \tilde k'=k', \tilde \lambda'=\frac{\lambda'}{t})$. Let us consider this situation in Example \ref{example_designpairs}.

\begin{example}
\label{example_designpairs}
Consider the simple design  $\D$  with parameters  $(v=6,\,  b=10,\,  r=5,\,  k=3, \, \lambda=2)$ from \cite{Hall},  Table 1, p. 406 , design number 4; its blocks are  the columns of the following array: 
\begin{equation*}
\small
\begin{array}{cccccccccc}
0 & 0 & 0 & 0 & 0 & 1 & 1 & 1 & 2 & 2   \\
1 & 1 & 2 & 3 & 3 & 2 & 3 & 4 & 3 & 4 \\
2 & 5 & 4 & 4 & 5 & 3 & 4 & 5 & 5 & 5  \\
\end{array}.
\end{equation*}
 The intersection numbers $\mu_{ij}$ take two values, 1 and 2, and the conditions of Theorem \ref{thm_main} are satisfied. The resulting design $\D'$ has parameters  $(v'=10,\,  b'=15,\,  r'=9,\,  k'=6, \, \lambda'=5)$.
The equivalence relation on the columns of $\Z^+=\Z^+(\D)$ gives rise to a trivial system of pairs on the set $V=\{1, \dots, 6\}$ (the design $\D'$ is  simple). Consider now the constructed design $\D'.$ Its intersection numbers $\mu'_{ij}$ take
 again only two values: $\mu'_{ij}\in\{3,4\}$ satisfying the second condition in \eqref{quadratic}. Therefore, by Theorem \ref{thm_main}, the matrix $\Z^+(\D')$ built from  $\D'$ is an incidence matrix of a block design $\D'':=(\D')'$ with parameters $v''=15,\, b''=45,\,  r''=24,\,  k''=8$ and $\lambda''=12.$ The equivalence relation from Definition \ref{def_equivalence} with respect to $\D'$ on  the pairs of the set $V'=\{1, \dots, 10\}$ gives rise to the following symmetric system of pairs with parameters $(v=10, n=15, t=3)$: 
\begin{eqnarray*}
\small
\left.
\begin{array}{cccc}
\P_1(\D): &\!\!\!\!\{1,8 \}, &\!\! \{2,6 \}, &\!\! \{3,4 \};  \\
\P_2(\D): &\!\!\!\!\{1,9 \}, &\!\! \{3,5 \}, &\!\! \{6,7 \}; \\
\P_3(\D): &\!\!\!\!\{ 2,7 \}, &\!\! \{ 4,5 \}, & \!\!\{8 ,9 \};  \\
\P_4(\D): &\!\!\!\!\{0 ,1 \}, &\!\! \{2 , 5\}, &\!\! \{ 4, 7\}; \\
\P_5(\D): &\!\!\!\!\{ 0, 8\}, &\!\! \{ 3,7 \}, & \!\!\{5 ,6 \};  \\
\end{array}
\right. 
\quad
\left.
\begin{array}{cccc}
\P_6(\D): &\!\!\!\!\{ 0, 4\}, &\!\! \{1, 7\}, &\!\! \{6 ,9 \}; \\
\P_7(\D): &\!\!\!\!\{ 0,3 \}, &\!\! \{2 , 9\}, &\!\! \{7 ,8 \}; \\
\P_8(\D): &\!\!\!\!\{0 ,9 \}, &\!\! \{2 ,3 \}, &\!\! \{ 4, 6\};  \\ 
\P_9(\D): &\!\!\!\!\{ 2, 4\}, &\!\! \{ 3,6 \}, &\!\! \{5 ,7 \}; \\
\P_{10}(\D): &\!\!\!\!\{0  ,6 \}, &\!\! \{4 , 9 \}, & \!\!\{5 , 8\};  
\end{array}
\right. 
\quad
\left.
\begin{array}{cccc}
\P_{11}(\D): &\!\!\!\!\{ 0, 2\}, &\!\! \{1 ,5 \}, &\!\! \{ 3, 9\}; \\
\P_{12}(\D): &\!\!\!\!\{0 , 7\}, &\!\! \{1 , 4\}, &\!\! \{ 3,8 \};  \\
\P_{13}(\D): &\!\!\!\!\{1 ,3 \}, & \!\!\{4 , 8\}, &\!\! \{ 5,9 \}; \\
\P_{14}(\D): &\!\!\!\!\{ 1, 6\}, &\!\! \{2 , 8\}, & \!\!\{7 ,9 \};  \\
\P_{15}(\D): &\!\!\!\!\{0 , 5\}, &\!\! \{ 1, 2\}, & \!\!\{6 ,8 \}.
\end{array}
\right.
\end{eqnarray*}
Identifying identical blocks of $\D''$, we obtain a design $\tilde \D''$ with parameters $(\tilde v''=15, \tilde b''=15, \tilde r''=8, \tilde k''=8, \tilde \lambda''=4).$ It turns out that, starting from $\tilde \D''$, we can iterate the described process an infinite number of times while identifying identical blocks after every iteration. In this way, we obtain a finite number of designs, each with the same parameters as $\tilde \D''$.

\end{example}

\section{Relationship of friendship between designs $\D$ and $\D_2(\P(\D))$}
\label{sect_friends}

Let us now remind a relationship between block designs built on the same set introduced in \cite{friends}. To this end, for a block design $\D$ built on a set $V$ and some subset $S$ of $V$, define $\rho_j\in\mathbb N\cup\{0\}$ as the number of blocks of $\D$ whose intersections with $S$ contain exactly $j$ elements. The numbers $\rho_j$ satisfy the following relations for $(v,b,r,k,\lambda)$ being the parameters of $\D$ and $s$ being the number of elements in the set $S$  (see Lemma 4 from \cite{friends}): 
\begin{equation}
\label{eqs}
\sum_{j=0}^k \rho_j=b,\qquad
\sum_{j=0}^k \rho_j{j}=rs, \qquad 
\sum_{j=0}^k \rho_j{j^2}=s(\lambda s-\lambda+r).
\end{equation} 
The second relation follows by considering the number of elements in all the intersections of $S$ and the blocks of $\D$. To obtain the third relation, one focuses on the total number of pairs of elements in these intersections. 

Due to the first equation in \eqref{eqs}, the set of numbers $\{\rho_0, \rho_1, . . . , \rho_k\}$ is a partition of the number of blocks of  $\D$. Let us denote this ordered partition by $\rho(\D,S)=(\rho_0, \rho_1, . . . , \rho_k).$ 
 \begin{definition}\cite{friends}
Two block designs $\D^{(1)}$ and $\D^{(2)}$ built on the same set $V$ are called friends if the partitions $\rho(\D^{(1)},B^{(2)}_i)$ and $\rho(\D^{(2)},B^{(1)}_j)$ do not depend on $i$ and $j$, respectively. Here $B^{(j)}_i$ are the blocks of the design $\D^{(j)}$.
\end{definition}
\begin{proposition}
\label{prop_friends}
Let $\D$ be a block design built on a set $V$ such that $\P(\D)$ is a system of pairs. Let $\D_2(\P(\D))$ be the block design from the proof of  item \ref{2} of Theorem \ref{thm_pairstoblocks} built using $\P(\D)$. 
Then $\D$ and $\D_2(\P(\D))$ are friends. 
\end{proposition}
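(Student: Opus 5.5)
The plan is to show that for both directions of the friendship relation the partition $\rho$ has only three possibly nonzero entries, and that the three relations \eqref{eqs} then determine these entries uniquely in terms of parameters that do not depend on the chosen block. The structural input is item \ref{presystem3} of Lemma \ref{lemma_presystem} together with Theorem \ref{thm_iff}. Since $\D_2(\P(\D))$ is only defined for a nontrivial system of pairs, I will assume that $\P(\D)$ is nontrivial with $t>1$ (and $v\neq 4$). By Theorem \ref{thm_iff}, the block set of $\D_2(\P(\D))$ is then exactly $\Q_4$ from \eqref{q4}, and it is a design with parameters $(v, b_2, r_2, 4, \lambda_2=t-1)$ as in \eqref{D2param}.

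\emph{Key observation.} For every $Q\in\Q_4$ and every block $B_j$ of $\D$, we have $|Q\cap B_j|\in\{0,2,4\}$. This is the definition of $\Q_4$, or equivalently item \ref{presystem3} of Lemma \ref{lemma_presystem}. The relation is symmetric, so it controls both partitions at once.

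\emph{First direction.} Fix a block $Q$ of $\D_2(\P(\D))$ and compute $\rho(\D,Q)$ with $s=4$. By the key observation, $\rho_1=\rho_3=0$ and $\rho_j=0$ for $j>4$. The relations \eqref{eqs} then become
\begin{equation*}
\rho_0+\rho_2+\rho_4=b,\qquad 2\rho_2+4\rho_4=4r,\qquad 4\rho_2+16\rho_4=4(3\lambda+r).
\end{equation*}
This is a linear system in $(\rho_0,\rho_2,\rho_4)$ whose coefficient matrix is triangular after reordering. Its determinant is $2\cdot 16-4\cdot 4=16\neq 0$ in the $(\rho_2,\rho_4)$ part, and $\rho_0$ is then read off from the first equation. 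Hence the solution is unique and depends only on $(b,r,\lambda)$, not on $Q$.

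\emph{Second direction.} Fix a block $B_i$ of $\D$ and compute $\rho(\D_2(\P(\D)),B_i)$ with $s=k$. Again every block of $\D_2(\P(\D))$ meets $B_i$ in $0$, $2$ or $4$ elements, so only $\rho_0,\rho_2,\rho_4$ can be nonzero. Applying \eqref{eqs} to the design $\D_2(\P(\D))$ gives
\begin{equation*}
\rho_0+\rho_2+\rho_4=b_2,\qquad 2\rho_2+4\rho_4=r_2k,\qquad 4\rho_2+16\rho_4=k(\lambda_2 k-\lambda_2+r_2).
\end{equation*}
The coefficient matrix is the same as before, so the solution is unique and depends only on $k,b_2,r_2,\lambda_2$. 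It is therefore independent of $i$, and the two designs are friends.

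\emph{Expected obstacle.} The computation itself is routine. The main point to get right is justifying that $\D_2(\P(\D))$ is a genuine design whose block set is $\Q_4$, with constant $r_2$ and $\lambda_2$; this is needed to apply \eqref{eqs} in the second direction, and it is supplied by Theorem \ref{thm_iff}. The other point is to state the nontriviality hypothesis explicitly, since $\D_2(\P(\D))$ is not defined otherwise.
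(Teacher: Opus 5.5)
Your proposal is correct and follows essentially the same route as the paper. Item \ref{presystem3} of Lemma \ref{lemma_presystem} forces $\rho_1=\rho_3=0$ in both directions, and the relations \eqref{eqs} then determine $\rho_0,\rho_2,\rho_4$ uniquely; the paper solves your first system explicitly as $\rho_0=b-\frac{1}{2}(5r-3\lambda)$, $\rho_2=3(r-\lambda)$, $\rho_4=\frac{1}{2}(3\lambda-r)$, and your explicit assumption that $\P(\D)$ is nontrivial with $v\neq 4$ is a sensible clarification, since only then is $\D_2(\P(\D))$ a design.
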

{\it Proof.} Let $(v,b,r,k, \lambda)$ be the parameters of $\D$. Denote by  $Q_j$ the blocks of  $\D_2(\P(\D))$; the blocks of $\D$ are denoted by $B_i$ as before. Let us prove that $\rho(\D, Q_j)$ does not depend on $j$. By construction, $Q_j$ is a union of two pairs $\{x,y\}$ and $\{\a,\b\}$ of elements of $V$ such that 
$\{x,y\} \sim_\D \{\a,\b\}$. By item \ref{presystem3} of Lemma \ref{lemma_presystem}, $\rho(\D, Q_j)=(\rho_0, 0, \rho_2, 0, \rho_4).$ Thus, the three possibly nonzero values $\rho_0, \rho_2, \rho_4$ can be found from equations \eqref{eqs} with $s=4$: 
\begin{equation}
\label{024}
\rho_0=b-\frac{1}{2}(5r-3\lambda), \qquad \rho_2=3(r-\lambda), \qquad \rho_4= \frac{1}{2}(3\lambda-r).
\end{equation} 
This proves that the partitions $\rho(\D, Q_j)$ are the same for all $j$.  

Let us now consider ordered partitions $\rho(\D_2(\P(\D)), B_i)$ of the number of blocks of $\D_2(\P(\D))$, that is of the number $\frac{n}{3}{t\choose 2}$, where $t$ is the number of pairs in the subsystems of $\P(\D)$ and $n$ is the number of subsystems, see item \ref{2} of Theorem \ref{thm_pairstoblocks}. For the same reasons as above, these partitions are of the form $\rho(\D_2(\P(\D)), B_i)=(\hat\rho_0, 0, \hat\rho_2, 0, \hat\rho_4)$ and thus the integers $\hat\rho_0, \hat\rho_2, \hat\rho_4$ may be determined from equations \eqref{eqs} with $b=b_2, r=r_2, k=k_2, \lambda=\lambda_2$ being the parameters of $\D_2(\P(\D))$ given by \eqref{D2param} and $s=|B_i|=k$. Thus the partitions $\rho(\D_2(\P(\D)), B_i)$ are independent of $i.$
\hfill $\Box$

\begin{corollary}
\label{cor_parity}
Let $\D$ be a block design with parameters $(v, b, r, k, \lambda)$ and  $\P(\D)=\{\P_1(\D), \dots, \P_n(\D)\}$ be the set of equivalence classes on the set $V^{(2)}$ induced by $\D$ as in Definition \ref{def_equivalence}. If one of the equivalence classes $\P_j(\D)$ contains more than one element (that is more than one pair of elements of $V$), then $r$ and $\lambda$ are of the same parity. 
In particular, if $\P(\D)$ is a nontrivial system of pairs, then $r$ and $\lambda$ are of the same parity. If $r$ and $\lambda$ are of different parities, then $\P(\D)$ is a trivial system of pairs.
\end{corollary}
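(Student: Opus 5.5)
If some class $\P_j(\D)$ contains two distinct pairs $\{x,y\}\sim_\D\{\a,\b\}$, then the four-element set $S=\{x,y,\a,\b\}$ yields an admissible partition $\rho(\D,S)$ with only even entries allowed. The plan is to derive the parity of $r-\lambda$ from the relations \eqref{eqs}, exactly as in the proof of Proposition \ref{prop_friends}.

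First, by item \ref{presystem2} of Lemma \ref{lemma_presystem} the four elements are distinct, so $|S|=4$. By item \ref{presystem3} every block meets $S$ in $0$, $2$ or $4$ elements. Hence $\rho(\D,S)=(\rho_0,0,\rho_2,0,\rho_4)$ with nonnegative integers $\rho_0,\rho_2,\rho_4$.

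Next we apply \eqref{eqs} with $s=4$:
\begin{equation*}
\rho_0+\rho_2+\rho_4=b,\qquad 2\rho_2+4\rho_4=4r,\qquad 4\rho_2+16\rho_4=4(3\lambda+r).
\end{equation*}
Solving the last two equations gives $\rho_4=\frac{1}{2}(3\lambda-r)$ and $\rho_2=3(r-\lambda)$, as in \eqref{024}. Since $\rho_4$ is an integer, $3\lambda-r$ is even, and therefore $r\equiv 3\lambda\equiv\lambda \pmod 2$.

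The remaining claims follow at once. A nontrivial system of pairs has some class with $t>1$ pairs. Contrapositively, if $r$ and $\lambda$ have different parities, every class is a singleton, so $\P(\D)$ is the trivial system of pairs with $t=1$. There is no real obstacle here; the only point needing care is to invoke Lemma \ref{lemma_presystem} so that $|S|=4$ and all intersections are even, which is what justifies using \eqref{eqs} with $s=4$ and only even $j$.
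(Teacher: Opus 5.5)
Your proof is correct and follows the paper's argument. The paper uses the same chain: Lemma \ref{lemma_presystem} gives $|S|=4$ and only even intersections, and then \eqref{eqs} with $s=4$ forces $\rho_4=\frac{1}{2}(3\lambda-r)$ to be an integer, so $r\equiv\lambda \pmod 2$; you additionally write out the solving step and the contrapositive for the trivial system, which the paper leaves implicit.
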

{\it Proof.} Suppose $\P_j(\D)$ contains two pairs $\{\a,\b\}$ and $\{x,y\}$. By item \ref{presystem2} of Lemma \ref{lemma_presystem}, these two pairs do not have common elements. The intersection of the set $S=\{\a,\b,x,y\}$ with every block of $\D$ contains either 0, 2 or 4 elements and the corresponding numbers $\rho_0, \rho_2, \rho_4$ are given by \eqref{024}. The numbers \eqref{024} are integers if and only if $r$ and $\lambda$ are of the same parity. 
\hfill $\Box$

\vskip 0.5cm
{\bf Acknowledgments.} VS gratefully acknowledges
support from the Natural Sciences and Engineering Research Council of Canada (discovery grant) and the University of Sherbrooke.


\end{document}